\newtheorem{thm}{Theorem}[section]
\newtheorem{lem}[thm]{Lemma}
\newtheorem{prop}[thm]{Proposition}
\newtheorem{rems}[thm]{Remarks}
\newtheorem{rem}[thm]{Remark}
\theoremstyle{definition}
\numberwithin{equation}{section}
\DeclareMathAlphabet{\mathpzc}{OT1}{pzc}{m}{it}
\newcommand{\N}{\mathbb N}
\newcommand{\R}{\mathbb R}
\newcommand{\C}{\mathbb C}
\newcommand{\K}{\mathbb K}
\newcommand{\B}{\mathbb B}
\newcommand{\kL}{\mathcal L}
\newcommand{\ve}{\varepsilon}
\newcommand{\wh}{\widehat}
\newcommand{\ov}{\overline}
\newcommand{\rd}{\mathrm{d}}
\begin{document}

 \author{Bogdan--Vasile Matioc}
\address{Fakult\"at f\"ur Mathematik, Universit\"at Regensburg,   93053 Regensburg, Germany.}
\email{bogdan.matioc@ur.de}

\author{Christoph Walker}
\address{Leibniz Universit\"at Hannover,
Institut f\"ur Angewandte Mathematik,
Welfengarten 1,
30167 Hannover,
Germany.}
\email{walker@ifam.uni-hannover.de}

\title[Well-Posedness in Time-Weighted Spaces]{Well-Posedness of Quasilinear Parabolic Equations in Time-Weighted Spaces}

\begin{abstract}
Well-posedness in time-weighted spaces of certain  quasilinear (and semilinear) parabolic evolution equations $u'=A(u)u+f(u)$ is established. 
The focus lies on the case of strict inclusions $\mathrm{dom}(f)\subsetneq \mathrm{dom}(A)$ of the domains of the nonlinearities $u\mapsto f(u)$ and  $u\mapsto A(u)$. 
Based on regularizing effects of parabolic equations it is shown that a semiflow is generated in intermediate spaces. 
In applications this allows one to derive global existence from weaker a priori estimates.  
The result is illustrated by  examples of chemotaxis systems.
\end{abstract}

\keywords{Quasilinear parabolic problem; semiflow; well-posedness; chemotaxis equations}
\subjclass[2020]{35B40; 35K58; 35K59; 35Q92} 

\maketitle
\section{Introduction}\label{Sec:1}

Let $E_0$ and $E_1$ be two Banach spaces  over $\K\in \{\R,\C\}$ with continuous and dense embedding 
$$
E_1 \stackrel{d}{\hookrightarrow} E_0\,.
$$
 For each $\theta\in (0,1)$, let $(\cdot,\cdot)_\theta$ be an arbitrary admissible interpolation functor of 
 exponent $\theta$ and denote by ${E_\theta:= (E_0,E_1)_\theta}$ the corresponding Banach space with norm $\|\cdot\|_\theta$. Then 
$$
E_1\stackrel{d}{\hookrightarrow} E_\theta\stackrel{d}{\hookrightarrow} E_\vartheta \stackrel{d}{\hookrightarrow} E_0\,,\qquad 0\le \vartheta\le \theta\le 1\,.
$$
 In this paper we shall focus our attention on quasilinear parabolic problems
\begin{equation}\label{EE}
u'=A(u)u+f(u)\,,\quad t>0\,,\qquad u(0)=u^0\,.
\end{equation} 
Here, we assume for the quasilinear part
\begin{subequations}\label{ASS}
\begin{equation}\label{a1c}
A\in C^{1-}\big(O_\beta,\mathcal{H}(E_1,E_0) \big)\,,
\end{equation}
where
\begin{equation}\label{a1b}
\beta\in [0,1)\quad \text{ and  }\quad  \emptyset\not= O_\beta\, \text{ is an open subset of } E_\beta\,,
\end{equation}
 and where $\mathcal{H}(E_1,E_0)$ is the open subset of the bounded linear operators $\kL(E_1,E_0)$  
consisting of generators of strongly continuous analytic semigroups on $E_0$.
 Moreover, for the semilinear part we assume that there are numbers
\begin{equation}\label{a1a}
0<\gamma<1\,,\qquad \beta\le \xi <1\,, \qquad q\ge 1\,,
\end{equation}
such that $f:O_\xi\to E_\gamma$ is locally Lipschitz continuous on the  open subset $O_\xi:=O_\beta\cap E_\xi$ of $E_\xi$ in the sense that for each $R>0$ there is $c(R)>0$  such that,
 for all $w,\,v\in O_\xi\cap \bar{\mathbb{B}}_{E_\beta}(0,R)$,
\begin{equation}\label{a1d}
\|f(w)-f(v)\|_{\gamma}\le  c(R)\big[1+\|w\|_{\xi}^{q-1}+\|v\|_{\xi}^{q-1}\big]\big[\big(1+\|w\|_{\xi}+\|v\|_{\xi}\big) \|w-v\|_{\beta}+\|w-v\|_{\xi}\big]\,.\\
\end{equation}
As for the initial value we fix
\begin{equation}\label{a1e}
\alpha\in (\beta,1) \quad \text{ with }\quad  (\xi-\alpha)q<\min\{1,1+\gamma-\alpha\}\,.
\end{equation}
\end{subequations}
Under assumptions~\eqref{ASS} we shall prove  that problem~\eqref{EE} is well-posed  for  any $u^0\in O_\alpha:=O_\beta\cap E_\alpha$ 
and in fact generates a semiflow on  $O_\alpha$. Note that~\eqref{a1d} includes in particular the case when
\begin{equation}\label{CPW}
\|f(w)-f(v)\|_{\gamma}\le  c(R)\big(1+\|w\|_{\xi}^{q-1}+\|v\|_{\xi}^{q-1}\big)\|w-v\|_{\xi}\,,\qquad
w,v\in O_\xi\cap \bar{\mathbb{B}}_{E_\beta}(0,R)\,,
\end{equation} 
and that \eqref{a1e} is satisfied if $\alpha\in (\xi,1)$.
 The local Lipshitz continuity property~\eqref{a1d} and its stronger version  \eqref{CPW} appear quite naturally in applications, 
see Lemma~\ref{g-Lemma} and the examples in Section~\ref{Sec4} and Section~\ref{Sec6}.\\

Of course, well-posedness of quasilinear and even fully nonlinear equations is well-established, see e.g. 
\cite{Am88,Amann_Teubner,ClementLi,CS01,G88,Lu84,Lu85,Lu85b,P02,PS16,PSW18,MW_MOFM20} for the former and e.g. \cite{An90,AT88,DaPL88,DaP96,DaPG79,Lu87,L95} for the latter problems. 
In particular, a general result on existence of solutions to~\eqref{EE} is stated in \cite[Theorem~12.1]{Amann_Teubner} 
(and established in \cite{Am88}, see also \cite{MW_MOFM20}) for the case that the nonlinearities  $A$ and $f$
are defined and Lipschitz continuous on the same set $O_\beta$. More precisely, it is proven therein that if
\begin{equation*}
(A,f)\in C^{1-}\big(O_\beta,\mathcal{H}(E_1,E_0)\times E_\gamma\big)\,,\qquad u^0\in O_\alpha\,,\qquad 0<\gamma\le \beta<\alpha<1\,,
\end{equation*}
(which is a special case of~\eqref{ASS} taking $q=1$ and~$\xi=\beta<\alpha$), then problem~\eqref{EE} has    a unique maximal strong solution
\begin{equation*} 
 u=u(\cdot;u^0)\in C^1((0,t^+(u^0)),E_0)\cap C((0,t^+(u^0)),E_1)\cap  C ([0,t^+(u^0)),O_\alpha)\cap C^{ \alpha-\theta}\big([0,t^+(u^0)), E_\theta\big)\,
\end{equation*}
for $\theta\in [0,\alpha]$. 
 Moreover, the map $(t,u^0)\mapsto u(t;u^0)$ is a semiflow on $O_\alpha$, and hence $t^+(u^0)=\infty$  if the corresponding orbit is relatively compact in $O_\alpha$.

Herein we shall prove with Theorem~\ref{T1} below a similar result to \cite[Theorem~12.1]{Amann_Teubner}  (see also \cite{Am88}) 
for problem~\eqref{EE}, but under the more general assumptions~\eqref{ASS}.
 In fact, as pointed out above, the interesting and new case occurs when~${\beta<\alpha<\xi}$ in~\eqref{ASS}. 
 Note that then $E_\xi\hookrightarrow E_\alpha\hookrightarrow E_\beta$ and hence, the semilinear part  $f$, being defined   on $  O_\xi$,  
 needs not be defined on the phase space $E_\alpha$ and  requires possibly more regularity than the quasilinear part  $A$.
It is worth emphasizing that also for this case we establish that problem~\eqref{EE} induces a semiflow on $O_\alpha$ 
 so that relatively compact orbits in $O_\alpha$ are global. The global existence criterion can thus be stated in weaker norms than e.g. in \cite{Amann_Teubner}. 

 For the proof we  rely on regularizing effects for quasilinear parabolic equations and work in time-weighted spaces $C_\mu(0,T],E_\xi)$  of continuous functions~${v:(0,T]\to E_\xi}$ 
satisfying \mbox{$\lim_{t\to 0}t^\mu\|v(t)\|_\xi=0$}, where $T>0 $ and  $\mu>(\xi-\alpha)_+$ with $x_+:=\max\{0,\,x\}$ for $x\in\R$.

Time-weighted spaces were used previously for quasilinear evolution problems in the context of maximal regularity in \cite{An90,CS01} and  later in \cite{CPW10, KPW10, PW17, PS16}. 
In particular,  well-posedness of \eqref{EE} is established in \cite{CPW10} in time-weighted $L_p$-spaces assuming that~$f$ satisfies \eqref{CPW} 
along with inequality~\eqref{a1e} for $\gamma=0$ in the scale of real interpolation spaces and 
assuming that the operator $A(u)$ has the property of maximal \mbox{$L_p$-regularity} for~${u\in O_\alpha}$ (see \cite{PW17} for an improvement with equality in~\eqref{a1e} when $E_0$ is a UMD space). 
Furthermore, we refer to \cite{LeCroneSimonett20} for a result in the same spirit based on the concept of
continuous maximal regularity in time-weighted spaces and assuming \eqref{CPW} 
 with inequality~\eqref{a1e} in the scale of continuous interpolation spaces.
Theorem~\ref{T1} below is a comparable result outside the setting of maximal regularity and for arbitrary (admissible) interpolation functors under the more general version~\eqref{a1d} of \eqref{CPW}.

Finally, we point out that, in order to impose weaker conditions on the initial values, time-weighted spaces  $C_\mu(0,T],E)$ (with suitable Banach spaces $E$) were used for concrete semilinear problems (see \eqref{SC1SEEE} below) with bilinear right-hand sides (i.e.~$q=2$ in \eqref{CPW}) even before \cite{A00,AW05,WW06} and recently \cite{LW22}. Theorem~\ref{XT:1} below provides a general result for this case, thereby sharpening the result for the quasilinear problem.\\

Our first main result is Theorem~\ref{T1} and  establishes  the well-posedness of the quasilinear evolution problem \eqref{EE} restricted to the assumptions~\eqref{ASS}.

\begin{thm}\label{T1}
Suppose \eqref{ASS}.

\begin{itemize}
\item[(i)] \emph{(Existence)} Given any $u^0\in O_\alpha$, the Cauchy problem \eqref{EE} possesses a  maximal strong  solution
\begin{equation*} 
\begin{aligned}
 u(\cdot;u^0)&\in C^1\big((0,t^+(u^0)),E_0\big)\cap C\big((0,t^+(u^0)),E_1\big)\cap  C \big([0,t^+(u^0)),O_\alpha\big)
\end{aligned}
\end{equation*}
with $ t^+(u^0)\in(0,\infty]$. Moreover, 
\[
u(\cdot;u^0)\in C^{\min\{\alpha-\theta,\,(1-\mu q)_+\}}\big([0, T], E_\theta\big)\cap   C_{ \mu}\big((0,T], E_\xi\big)
\]  
for all $T<t^+(u^0)$,  where $\theta\in [0,\alpha]$ and $\mu>(\xi-\alpha)_+$. \vspace{1mm}

\item[(ii)] {\em (Uniqueness)} If  $$\tilde u \in C^1\big((0,T],E_0\big)\cap C\big((0,T],E_1\big)\cap C^\vartheta \big([0,T],O_\beta\big) \cap C_\nu\big((0,T],E_\xi\big)$$ 
 is a solution to  \eqref{EE} for some~$T>0$, $\vartheta\in(0,1)$, and $\nu\ge 0$ with $q\nu<\min\{ 1,1+\gamma-\alpha\}$, then $T<t^+(u^0)$ and $\tilde u=u(\cdot;u^0)$ on $[0,T].$ \vspace{1mm}

\item[(iii)] {\em (Continuous dependence)} The map $(t,u^0)\mapsto u(t;u^0)$ is a semiflow on $O_\alpha$. \vspace{1mm}

\item[(iv)] {\em (Global existence)} If the orbit $u([0,t^+(u^0));u^0)$ is relatively compact in $O_\alpha$, then~${t^+(u^0)=\infty}$.\vspace{1mm} 

\item[(v)] {\em (Blow-up criterion)} Let $u^0\in O_\alpha$ be such that $t^+(u^0)<\infty$. \vspace{1mm}
\begin{itemize}
\item[($a$)] If
$u(\cdot;u^0):[0,t^+(u^0))\to E_\alpha$ is uniformly continuous, then
\begin{equation}\label{1a} 
 \lim_{t\nearrow t^+(u^0)}\mathrm{dist}_{E_\alpha}\big( u(t;u^0),\partial O_\alpha\big)=0\,.
\end{equation}

\item[($b$)]  If $E_1$ is compactly embedded in $E_0$, then 
\begin{equation}\label{1}
\lim_{t\nearrow t^+(u^0)}\|u(t;u^0)\|_{\theta}=\infty\qquad\text{ or }\qquad \lim_{t\nearrow t^+(u^0)}\mathrm{dist}_{E_\beta}\big( u([0,t];u^0),\partial O_\beta\big)=0
\end{equation}
for each $\theta\in (\beta,1)$ with $(\xi-\theta)q<\min\{1,1+\gamma-\theta\}$.
\end{itemize}
\end{itemize}
\end{thm}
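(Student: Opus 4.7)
The plan is a Banach fixed-point argument in a time-weighted functional setting that captures the regularising effect of the parabolic semigroup. Fix $u^0\in O_\alpha$ and pick a weight $\mu$ with $(\xi-\alpha)_+<\mu$ and $\mu q<\min\{1,1+\gamma-\alpha\}$; such $\mu$ exist precisely by \eqref{a1e}. For $T,R>0$ to be chosen small, work in the complete metric space
\[
\mathcal{V}_T:=\Big\{v\in C([0,T],E_\alpha)\cap C_\mu((0,T],E_\xi):\ v(0)=u^0,\ \|v(t)-u^0\|_\alpha\le R,\ \sup_{t\in(0,T]}t^\mu\|v(t)\|_\xi\le R\Big\}.
\]
For $v\in\mathcal{V}_T$ the time-dependent coefficient $A(v(\cdot))$ remains in a small neighbourhood of $A(u^0)$ in $\mathcal{H}(E_1,E_0)$, and classical parabolic theory (Sobolevskii--Tanabe, \cite{Am88}) produces an evolution operator $U_v(t,s)$ with smoothing estimates $\|U_v(t,s)\|_{\mathcal{L}(E_\theta,E_\eta)}\le M(t-s)^{\theta-\eta}$ for $0\le\theta\le\eta\le 1$, with $M$ uniform over $v\in\mathcal{V}_T$. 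Define the fixed-point map
\[
\Phi(v)(t):=U_v(t,0)u^0+\int_0^t U_v(t,s)f(v(s))\,ds.
\]

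The self-mapping and contraction properties both reduce to Beta-type integrals. Using \eqref{a1d} relative to a fixed reference point in $O_\xi$ (which is non-empty because $O_\beta\cap E_1\ne\emptyset$) together with $\sup_s s^\mu\|v(s)\|_\xi\le R$ yields a growth bound $\|f(v(s))\|_\gamma\le C(1+s^{-\mu q})$. For the difference term one uses the identity $U_{v_1}(t,s)-U_{v_2}(t,s)=-\int_s^t U_{v_1}(t,\tau)[A(v_1)-A(v_2)](\tau)U_{v_2}(\tau,s)\,d\tau$ together with \eqref{a1c}. Assembling these, the resulting integrals have the schematic form $\int_0^t(t-s)^{\gamma-\eta}s^{-\mu q}\,ds$ with $\eta\in\{\alpha,\xi\}$; they converge at $s=0$ iff $\mu q<1$ and yield $t$-powers of size $1+\gamma-\eta-\mu q$ that are controlled by the conditions on $\mu$. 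Shrinking $T$ (with $R$ fixed afterwards so that $u^0$ sits comfortably in $O_\beta$) turns $\Phi$ into a strict contraction, producing a unique fixed point $u\in\mathcal{V}_T$. Classical parabolic regularity applied to the linear non-autonomous equation $w'=A(u(t))w+f(u(t))$ upgrades $u$ to a strong solution in $C^1((0,T],E_0)\cap C((0,T],E_1)$, and the Hölder regularity stated in (i) is read off directly from the variation-of-constants formula. A standard extension argument then produces the maximal interval $[0,t^+(u^0))$.

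The remaining parts follow by routine adaptation. For uniqueness (ii), the condition $q\nu<\min\{1,1+\gamma-\alpha\}$ mirrors \eqref{a1e} so that the same contraction estimates apply with weight $\nu$ on a small initial interval, and coincidence propagates to all of $[0,T]$. Continuous dependence (iii) is obtained by treating $u^0$ as a parameter in $\Phi$ and deriving a Lipschitz estimate of the solution in the weighted norm, which combined with (ii) delivers the semiflow property and openness of the domain of definition. Part (iv) is the standard extension argument using uniformity of the local existence time on relatively compact subsets of $O_\alpha$. In (v-$a$), uniform continuity of $u(\cdot;u^0)$ into $E_\alpha$ together with positive distance to $\partial O_\alpha$ yields a limit $u_\star\in O_\alpha$, and restarting the flow at $u_\star$ contradicts maximality; in (v-$b$), the compact embedding $E_\theta\hookrightarrow E_\beta$ for $\theta>\beta$ combined with an interpolation argument reduces the alternative to the uniform continuity scenario of (v-$a$).

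The principal obstacle is the calibration of $\mu$ against the polynomial growth order $q$ in \eqref{a1d}: a weight $\mu$ that is too small cannot tame the $E_\xi$-singularity of $v$ at $s=0$ inside $f$, while a weight that is too large destroys the integrability $\mu q<1$ of the Duhamel kernel or breaks the uniform $E_\alpha$-control of $\Phi(v)$. Condition \eqref{a1e} is exactly what is needed for a working window of $\mu$ to exist, and ensuring that every estimate fits simultaneously inside this window — in particular the Hölder continuity of $t\mapsto A(v(t))$ required to define $U_v$ — is the real technical crux of the proof.
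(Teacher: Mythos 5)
Your overall strategy is the same as the paper's: a Banach fixed point argument for the Duhamel map in a time-weighted space, with the weight $\mu$ calibrated by \eqref{a1e} so that $\mu q<\min\{1,1+\gamma-\alpha\}$ while $\mu>(\xi-\alpha)_+$, followed by the standard extension, semiflow and compactness arguments. However, there is one genuine gap in the construction, and you have in fact pointed at it yourself without closing it: your space $\mathcal{V}_T$ consists of functions that are merely \emph{continuous} into $E_\alpha$ (plus the weighted $E_\xi$ bound), and for such a $v$ the map $t\mapsto A(v(t))$ is only continuous into $\mathcal{L}(E_1,E_0)$. That is not enough to define the evolution operator $U_v(t,s)$: the Sobolevskii--Tanabe/Amann construction requires $A(v(\cdot))$ to be (uniformly) H\"older continuous in time. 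Continuity into $E_\alpha$ with $\alpha>\beta$ does not self-improve to H\"older continuity into $E_\beta$; interpolation inequalities run the wrong way for that. The paper resolves this by building the H\"older bound into the space --- its $\mathcal{V}_T$ imposes $\|v(t)-v(s)\|_\beta\le|t-s|^{\rho}$ with $\rho<\min\{\alpha-\beta,\,1-\mu q\}$, uniformly over the class --- and then verifies that $\Phi$ \emph{preserves} this seminorm bound (this is where the hypothesis $\alpha>\beta$ is actually consumed: the term $U_v(t,0)u^0$ is H\"older of exponent $\alpha-\beta$ into $E_\beta$, and the Duhamel term of exponent $1-\mu q$). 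Declaring the H\"older continuity of $t\mapsto A(v(t))$ to be ``the real technical crux'' does not substitute for putting it into the definition of the space and checking its invariance; as written, $\Phi$ is not even well defined on your $\mathcal{V}_T$.

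Two smaller points. First, your smoothing estimate $\|U_v(t,s)\|_{\mathcal{L}(E_\vartheta,E_\theta)}\le M(t-s)^{\vartheta-\theta}$ is not available with the exact exponents for arbitrary admissible interpolation functors; one only gets $(t-s)^{\theta-\vartheta_0}$ with $\vartheta_0<\vartheta$ when $0<\vartheta<\theta<1$. This is why the paper introduces auxiliary exponents $\gamma_0\in(0,\gamma)$ and $\alpha_0\in(\beta,\alpha)$ and why \eqref{much} is stated with $\gamma_0,\alpha_0$; your Beta-integral bookkeeping must be done with these slightly degraded exponents, which still works because the inequalities in \eqref{a1e} are strict. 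Second, in (v-$b$) the argument does not ``reduce to the uniform continuity scenario of (v-$a$)'': one uses the compact embedding $E_\theta\hookrightarrow E_\alpha$ (after arranging $\theta>\alpha$) to extract a sequence $t_n\nearrow t^+$ with $u(t_n)$ convergent in $E_\alpha$ to a point of $O_\alpha$, and then invokes the uniformity of the local existence time on the compact set $\overline{\{u(t_n)\}}$ to extend the solution --- this uniformity over compact subsets of $O_\alpha$ (rather than over balls) is precisely why the paper formulates its local existence proposition for a compact set $S_\alpha$ instead of for a single initial datum.
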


 Criterion~(iv) yields global existence when the orbit is relatively compact in $E_\alpha$. 
  In particular, if~$E_1$ embeds compactly in $E_0$, 
then a priori estimates on the solution in $E_\alpha$ are sufficient for global existence as noted in condition~\eqref{1} (in contrast  e.g. to \cite{Amann_Teubner} 
where estimates in $E_\xi$ would be needed for the same conclusion).

The proof of Theorem~\ref{T1} relies on a classical fixed point argument. 
However, the technical details do not seem to be completely straightforward due to the singularity of  $t\mapsto f(u(t))$ at $t=0$ which has to be monitored carefully.


\subsection*{ Semilinear Parabolic Problems}

Of course, the result for the quasilinear case remains true for semilinear  parabolic equations 
\begin{equation*}
u'=Au+f(u)\,,\quad t>0\,,\qquad u(0)=u^0\,,
\end{equation*} 
or, more generally,  for  parabolic evolution equations
\begin{equation}\label{SC1SEEE}
u'=A(t)u+f(u)\,,\quad t>0\,,\qquad u(0)=u^0\,,
\end{equation}
with time-dependent operators $A=A(t)$.
 In this setting Theorem~\ref{T1} can be sharpened though.
We present  with Theorem~\ref{XT:1} below a result for the particular case that $f$ is defined on the whole interpolation space $E_\xi$. More precisely, let
\begin{subequations}\label{V16}
\begin{equation}
A\in C^\rho(\R^+,\mathcal{H}(E_1,E_0))
\end{equation} 
for some $\rho>0$ and let 
\begin{equation}\label{16b}
  0\le \alpha\leq  \xi\le 1\,,\quad 0\le \gamma <1\,,\quad (\gamma,\xi)\not= (0,1)\,,\quad q\ge 1\,,\quad (\xi-\alpha)q<\min\{1,1+\gamma-\alpha\}\,.
\end{equation} 
Assume that $f:E_\xi\to E_\gamma$ is locally Lipschitz continuous  in the sense that for each $R>0$ there is a constant~${c(R)>0}$  such that
\begin{equation}\label{Xa1d}
\|f(w)-f(v)\|_{\gamma}\le  c(R)\big[1+\|w\|_{\xi}^{q-1}+\|v\|_{\xi}^{q-1}\big]\big[\big(1+\|w\|_{\xi}+\|v\|_{\xi}\big) \|w-v\|_{{ \alpha}}+\|w-v\|_{\xi}\big]
\end{equation}
for all $w,\,v\in E_\xi\cap \bar{\mathbb{B}}_{E_{ \alpha}}(0,R)$. 
\end{subequations}

It is worth pointing out that
 we may choose the  phase space of the evolution as well as the  target space of the semilinearity $f$  as $E_0$  (that is, we may set $\alpha=\gamma=0$) and that the nonlinearity~$f(u)$
  need not be defined on the phase space $E_\alpha$; see also Remark~\ref{Rem1} below for more details. 
   The well-posedness result regarding the semilinear problem~\eqref{SC1SEEE} under assumption~\eqref{V16} reads as follows:

\begin{thm}\label{XT:1}
 Suppose~\eqref{V16}. 

\begin{itemize}
\item[(i)] (Existence) Given any $u^0\in E_\alpha$, the Cauchy problem \eqref{SC1SEEE} possesses a  maximal strong  solution
\begin{equation*} 
\begin{aligned}
 u(\cdot;u^0)&\in C^1\big((0,t^+(u^0)),E_0\big)\cap C\big((0,t^+(u^0)),E_1\big)\cap  C \big([0,t^+(u^0)),E_\alpha\big) 
\end{aligned}
\end{equation*}
with $ t^+(u^0)\in(0,\infty]$.  Moreover, 
\begin{equation*} 
 u(\cdot;u^0)\in  C^{ \min\{\alpha-\theta,\,(1-\mu q)_+\}}\big([0,T], E_\theta\big)\cap  C_{ \mu}\big((0,T], E_\xi\big)
\end{equation*}
for  all $\theta\in [0,\alpha]$, $\mu>\xi-\alpha$, and $T<t^+(u^0)$. \vspace{1mm}

 \item[(ii)] {\em (Blow-up criterion)} If $u^0\in E_\alpha$ is such that $t^+(u^0)<\infty$, then
\begin{equation}\label{X1}
 \underset{t\nearrow t^+(u^0)}\limsup\|u(t;u^0)\|_{\alpha}=\infty\,.
\end{equation}

\item[(iii)]  {\em (Uniqueness)} If  $$\tilde u \in C^1\big((0,T],E_0\big)\cap C\big((0,T],E_1\big)\cap C  \big([0,T],E_{ \alpha}\big) \cap C_\nu\big((0,T],E_\xi\big)$$ 
 is a solution to  \eqref{SC1SEEE} for some~$T>0$ and    $\nu\geq 0$ with $q\nu<\min\{ 1,1+\gamma-\alpha\}$, then $T<t^+(u^0)$ and $\tilde u=u(\cdot;u^0)$ on $[0,T].$ 
\end{itemize}

Moreover, if $A(t)=A\in \mathcal{H}(E_1,E_0)$  for all $t\geq 0$, then:  
\begin{itemize}
\item[(iv)] {\em (Continuous dependence)} The map $(t,u^0)\mapsto u(t;u^0)$ is a semiflow on $E_\alpha$. \vspace{1mm}

\item[(v)] {\em (Global existence)} If the orbit $u([0,t^+(u^0));u^0)$ is relatively compact in $E_\alpha$, then~${t^+(u^0)=\infty}$.\vspace{1mm} 
\end{itemize}
\end{thm}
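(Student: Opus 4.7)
The plan is to establish (i)--(v) by a Banach fixed point argument for the mild formulation, paralleling the proof of Theorem~\ref{T1} but considerably simplified because the nonlinearity $f$ is globally defined on $E_\xi$, so no open-set constraint needs to be monitored. Let $\{U(t,s)\}_{0\le s\le t}$ denote the parabolic evolution system generated by $A(\cdot)$, which satisfies the standard smoothing bound $\|U(t,s)\|_{\mathcal{L}(E_\sigma,E_\tau)}\le C(t-s)^{\sigma-\tau}$ for $0\le\sigma\le\tau\le 1$, $(\sigma,\tau)\ne(0,1)$, and $0\le s<t$. I would fix $\mu>\xi-\alpha$ so close to $\xi-\alpha$ that $\mu q<\min\{1,1+\gamma-\alpha\}$, which is permitted by~\eqref{16b}, and for $u^0\in E_\alpha$ set up a fixed-point problem on the complete metric space
\[
X_T(R):=\Bigl\{u\in C([0,T],E_\alpha)\cap C_\mu((0,T],E_\xi):\ \sup_{[0,T]}\|u\|_\alpha+\sup_{(0,T]}t^\mu\|u(t)\|_\xi\le R\Bigr\}
\]
(with its natural metric) for the map $\Phi u(t):=U(t,0)u^0+\int_0^t U(t,s)f(u(s))\,ds$, with $R>\|u^0\|_\alpha$ large and $T>0$ small.

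The necessary estimates go as follows. The homogeneous term $U(\cdot,0)u^0$ belongs to $X_T(R)$, and $t^\mu\|U(t,0)u^0\|_\xi\to 0$ as $t\to 0$ because $\mu>\xi-\alpha$. Applying~\eqref{Xa1d} with $v=0$, together with $\|u(s)\|_\xi\le Rs^{-\mu}$ and $\|u(s)\|_\alpha\le R$, bounds $\|f(u(s))\|_\gamma\le C(R)(1+s^{-\mu q})$; the smoothing estimate then yields $\bigl\|\int_0^t U(t,s)f(u(s))\,ds\bigr\|_\xi\le C(R)t^{1+\gamma-\xi-\mu q}$ plus lower-order contributions, and an analogous bound in the $E_\alpha$-norm with exponent $1+\gamma-\alpha-\mu q$; both exponents are positive by the choice of $\mu$. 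Analogously,~\eqref{Xa1d} applied to $u_1,u_2\in X_T(R)$ produces
\[
\|f(u_1(s))-f(u_2(s))\|_\gamma\le C(R)\,s^{-\mu q}\,d(u_1,u_2),
\]
where $d$ denotes the metric on $X_T(R)$. Integrating against $U$ yields a contraction constant proportional to $T^{1+\gamma-\xi-\mu(q-1)}$, whose exponent is positive because $(\xi-\alpha)q<1+\gamma-\alpha$ upon letting $\mu\to\xi-\alpha$. Taking $T$ small produces the unique mild solution on $[0,T]$.

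From the mild solution, bootstrapping the integral equation shows that $s\mapsto f(u(s))$ is locally H\"older continuous on $(0,T]$ into $E_\gamma\hookrightarrow E_0$. Classical linear parabolic theory applied to $v'=A(t)v+f(u(t))$ then upgrades $u$ to a strong solution in $C^1((0,T],E_0)\cap C((0,T],E_1)$, and the H\"older bounds claimed in~(i) follow from standard manipulations of the singular-at-zero convolution. Restarting the local construction at small $t_0>0$, where $u(t_0)\in E_\xi$ by instantaneous smoothing, and gluing yields a maximal solution on $[0,t^+(u^0))$. For the blow-up criterion~(ii), an a priori bound $\|u(t)\|_\alpha\le M$ for $t<t^+(u^0)$ provides, via the local construction, a lifespan bounded below uniformly in the restart time, contradicting $t^+(u^0)<\infty$. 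Uniqueness~(iii) follows from a singular-kernel Gr\"onwall argument applied to the difference of two mild formulations via~\eqref{Xa1d}. In the autonomous case, continuous dependence~(iv) is inherited from the parameter-continuity of the fixed point, while~(v) is an immediate consequence of~(ii).

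The main technical obstacle is the sharp management of the singularity of $f(u(s))$ at $s=0$: since $u(s)$ lies in $E_\xi$ only with controlled blow-up $\|u(s)\|_\xi\le Rs^{-\mu}$, the source is of order $s^{-\mu q}$ in $E_\gamma$, and the integrability of the resulting singular kernels in both the self-map and contraction estimates survives precisely under the numerology $\mu q<\min\{1,1+\gamma-\alpha\}$ enforced by~\eqref{16b}. The time-dependence of $A(t)$ only adds bookkeeping, handled through the parabolic evolution system in place of a semigroup.
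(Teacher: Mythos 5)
Your proposal is correct and follows essentially the same route as the paper's proof (Proposition~\ref{XP1}): a contraction argument for the mild formulation in $C([0,T],E_\alpha)\cap C_\mu((0,T],E_\xi)$ with $\mu>\xi-\alpha$ chosen so that $\mu q<\min\{1,1+\gamma-\alpha\}$, the same singular-kernel estimates for $\|f(u(s))\|_\gamma\le C s^{-\mu q}$, a restart argument giving the blow-up criterion from the $R$-only dependence of the local existence time, and bootstrapping plus linear parabolic theory for the upgrade to a strong solution. The only cosmetic deviations are that you center the ball at the origin rather than at $U_Au^0$, prove uniqueness by a singular Gr\"onwall inequality instead of exhibiting both solutions as fixed points of one contraction, and deduce (v) directly from (ii) rather than from the semiflow property --- all of which are sound.
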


  Note that an priori bound in $E_\alpha$ already ensures that the solution is globally defined even 
  in the case of a  non-compact  embedding~${E_1\hookrightarrow E_0}$. \\


The proof of Theorem~\ref{T1} is presented in Section~\ref{Sec2},  while Theorem~\ref{XT:1} is established  in Section~\ref{Sec2x}.
To prepare applications  of these results we state some auxiliary results in  Section~\ref{Sec3}. 
In the subsequent Section~\ref{Sec4} and Section~\ref{Sec6}  we will then provide some applications of Theorem~\ref{T1} and Theorem~\ref{XT:1} 
  to certain chemotaxis systems   featuring cross-diffusion terms,
in particular with focus on the global existence criterion.


\section{Proof of Theorem~\ref{T1}}\label{Sec2}

The proof of Theorem~\ref{T1} is based on Proposition~\ref{P1} below. Before we address  the latter result, let us first recall some basic facts used in the proofs.

\subsection*{Preliminaries}

Let $T>0$,  $\mu\in \mathbb{R}$, and consider a Banach space $E$. We denote  by $C_{\mu}((0,T],E)$ the Banach space of all functions
$u\in C((0,T],E)$ such that 
$t^{\mu} u(t)\rightarrow 0$ in $E$ as $t\rightarrow 0$,  equipped with the norm
\begin{equation*}
u\mapsto \|u\|_{C_{\mu}((0,T],E)} := \sup\left\{ t^{\mu}\, \|u(t)\|_E \,:\, t\in (0,T]\right\}\,.
\end{equation*}
 Note that 
\begin{equation}\label{Emb}
C_{\mu}((0,T],E)\hookrightarrow C_{\nu}((0,T],E)\,,\quad \mu\le \nu\,.
\end{equation}

Given $\omega>0$ and $\kappa\ge 1$,  we denote by~${\mathcal{H}(E_1,E_0;\kappa,\omega)}$ the class of all operators $\mathcal{A}\in\mathcal{L}(E_1,E_0)$ such 
that $\omega-\mathcal{A}$ is an
isomorphism from $E_1$ onto~$E_0$ and
$$
\frac{1}{\kappa}\,\le\,\frac{\|(\mu-\mathcal{A})z\|_{0}}{\vert\mu\vert \,\| z\|_{0}+\|z\|_{1}}\,\le \, \kappa\ ,\qquad {\rm Re\,}
\mu\ge \omega\ ,\quad z\in E_1\setminus\{0\}\,.
$$
 Then $$\mathcal{H}(E_1,E_0)=\bigcup_{\omega>0\,,\,\kappa\ge 1} \mathcal{H}(E_1,E_0;\kappa,\omega)\,.$$
For time-dependent operators $\mathcal{A}\in C^\rho(I,\mathcal{H}(E_1,E_0))$ with $\rho\in(0,1)$ 
there exists a unique parabolic evolution operator $U_\mathcal{A}(t,s)$, $0\le s\le t<\sup I$, in the sense of \cite[II. Section 2]{LQPP}. 

Based on this, we may reformulate the quasilinear Cauchy problem \eqref{EE}  as a fixed point equation of the form
\begin{equation}\label{ttp}
u(t)=U_{A(u)}(t,0)u^0+ \int_0^t U_{A(u)}(t,\tau) f(u(\tau))\,\rd \tau\,,\quad t>0\,,
\end{equation}
see the proof  below of Proposition~\ref{P1}.

\begin{prop}\label{P1}
Suppose \eqref{ASS}.
Let $S_\alpha \subset O_\alpha$ be a compact subset of $E_\alpha$.
Then, there exist a neighborhood~$Q_\alpha$ of $S_\alpha$ in $O_\alpha$ and~${T:=T(S_\alpha)>0}$ such that, for each $u^0\in Q_\alpha$,
the problem \eqref{EE} has a strong solution 
\begin{equation}\label{regul}
\begin{aligned}
u=u(\cdot;u^0)\in &\ C^1\big((0,T],E_0\big)\cap C\big((0,T],E_1\big)\cap C \big([0,T],O_\alpha\big) \\[1ex]
&\cap  C^{\min\{\alpha-\theta,\,(1-\mu q)_+\}}\big([0,T], E_\theta\big)\cap  C_\mu\big((0,T],E_\xi\big)
\end{aligned}
\end{equation}
for any $\theta\in [0,\alpha]$ and  $\mu>(\xi-\alpha)_+$. 
Moreover, there is a constant $c_0:=c_0(S_\alpha)>0$ such that
$$
\| u(t;u^0)-u(t;u^1)\|_\alpha\le c_0 \| u^0-u^1\|_\alpha\,,\qquad 0\le t\le T\,,\quad u^0, u^1\in Q_\alpha\,.
$$
Finally, if  
\begin{equation}\label{util}
\tilde u\in C^1((0,T],E_0)\cap C((0,T],E_1)\cap C^\vartheta ([0,T],O_\beta) \cap C_\nu\big((0,T],E_\xi\big)
\end{equation}
with  $ \vartheta\in(0,1)$
 and $0\le q\nu<\min \{ 1, 1+\gamma-\alpha \}$ 
 is a solution to  \eqref{EE} satisfying $\tilde u (0)=u^0 \in Q_\alpha$, then $\tilde u=u(\cdot;u^0)$.
\end{prop}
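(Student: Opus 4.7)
The strategy is a fixed point argument for the Duhamel form \eqref{ttp} in a time-weighted space. By compactness of $S_\alpha$ in $E_\alpha$ combined with \eqref{a1c}, I can choose a neighborhood $V$ of $S_\alpha$ in $O_\beta$ on which $A$ is Lipschitz continuous into a single class $\mathcal{H}(E_1,E_0;\kappa,\omega)$, while \eqref{a1d} holds with a uniform constant. Standard parabolic theory (as in \cite[II]{LQPP} and \cite{Am88}) then associates, to each H\"older-continuous curve $v:[0,T]\to V$, a parabolic evolution operator $U_{A(v)}(t,\tau)$ satisfying
$$\|U_{A(v)}(t,\tau)\|_{\mathcal{L}(E_\sigma,E_\theta)}\leq M(t-\tau)^{\sigma-\theta}\,,\qquad 0\leq\sigma\leq\theta\leq 1\,,$$
with $M$ independent of $v$ in this class, and subject to the continuous dependence formula
$$U_{A(v_1)}(t,\tau)-U_{A(v_2)}(t,\tau)=\int_\tau^t U_{A(v_1)}(t,s)\bigl[A(v_1(s))-A(v_2(s))\bigr]U_{A(v_2)}(s,\tau)\,\rd s\,.$$

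Next I fix $\mu>(\xi-\alpha)_+$ with $\mu q<\min\{1,1+\gamma-\alpha\}$, which is possible by \eqref{a1e}, and let $Q_\alpha$ be a sufficiently small $E_\alpha$-neighborhood of $S_\alpha$ contained in $V$. For $u^0\in Q_\alpha$ I consider the map
$$\Phi_{u^0}(v)(t):=U_{A(v)}(t,0)u^0+\int_0^t U_{A(v)}(t,\tau)f(v(\tau))\,\rd\tau$$
on the complete metric space
$$X_T(u^0):=\bigl\{v\in C([0,T],O_\alpha)\cap C_\mu((0,T],E_\xi):v(0)=u^0,\ \|v-u^0\|_{C([0,T],E_\alpha)}+\|v\|_{C_\mu}\leq r\bigr\}\,.$$
The self-mapping property reduces to three ingredients: (a) $\|U_{A(v)}(t,0)u^0\|_\xi\leq Mt^{-(\xi-\alpha)}\|u^0\|_\alpha$ together with $\mu>\xi-\alpha$; (b) continuity of $t\mapsto U_{A(v)}(t,0)u^0$ into $E_\alpha$ at $t=0$ via density of $E_1$ in $E_\alpha$; (c) the pointwise bound $\|f(v(\tau))\|_\gamma\leq C(1+\tau^{-\mu q})$ derived from \eqref{a1d} together with the singular convolution estimate $\int_0^t(t-\tau)^{-(\alpha-\gamma)_+}\tau^{-\mu q}\,\rd\tau\leq Ct^{\varepsilon}$ for some $\varepsilon>0$. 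Shrinking $T$ and $r$ absorbs the nonlinear contribution, so $\Phi_{u^0}(X_T(u^0))\subset X_T(u^0)$.

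The contraction property is the heart of the proof and where I expect the main technical difficulty. Decomposing $\Phi_{u^0}(v_1)-\Phi_{u^0}(v_2)$ into three pieces, the two involving $U_{A(v_1)}-U_{A(v_2)}$ are handled via the Duhamel identity above together with \eqref{a1c}, which gives $\|A(v_1(s))-A(v_2(s))\|_{\mathcal{L}(E_1,E_0)}\leq L\|v_1(s)-v_2(s)\|_\beta$; the third uses \eqref{a1d} directly to produce
$$\|f(v_1(\tau))-f(v_2(\tau))\|_\gamma\leq C\tau^{-\mu(q-1)}\bigl[\|v_1-v_2\|_{C([0,T],E_\alpha)}+\tau^{-\mu}\|v_1-v_2\|_{C_\mu}\bigr]\,.$$
After convolving with the smoothing kernels $(t-\tau)^{-(\xi-\gamma)}$ or $(t-\tau)^{-(\alpha-\gamma)}$, a positive power of $T$ is left over precisely because of the strict inequality in \eqref{a1e}, so $\Phi_{u^0}$ becomes a strict contraction on $X_T(u^0)$ for $T$ small, and the same estimate yields the Lipschitz bound $\|u(\cdot;u^0)-u(\cdot;u^1)\|_\alpha\leq c_0\|u^0-u^1\|_\alpha$. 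The refined regularity in \eqref{regul} comes from interpolating the $C_\mu((0,T],E_\xi)$-bound with $C([0,T],E_\alpha)$ to gain H\"older continuity of $A(u(\cdot))$, and then invoking classical non-autonomous linear parabolic smoothing for $w'=A(u(t))w+f(u(t))$. Uniqueness within class \eqref{util} follows because the hypotheses $\vartheta>0$ and $q\nu<\min\{1,1+\gamma-\alpha\}$ place $\tilde u$ in some $X_{T'}(u^0)$ after possibly enlarging $r$ and shrinking $T'$, where the contraction forces $\tilde u=u(\cdot;u^0)$. The delicate balancing of exponents in these singular convolutions — in particular the necessity of choosing $\mu$ strictly above $(\xi-\alpha)_+$ — is the main obstacle to making the argument work under the general hypothesis \eqref{a1d} rather than the more familiar \eqref{CPW}.
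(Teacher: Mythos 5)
Your overall strategy --- Banach's fixed point theorem for the Duhamel formulation \eqref{ttp} in a space mixing $C([0,T],\cdot)$ with $C_\mu((0,T],E_\xi)$, with $\mu q<\min\{1,1+\gamma-\alpha\}$ --- is the same as the paper's. But there is a genuine gap in the construction of your fixed-point set. You correctly note that the parabolic evolution operator $U_{A(v)}$ is only furnished (with constants uniform in $v$) for curves $v$ such that $A(v(\cdot))$ is H\"older continuous in time with a \emph{uniformly bounded} H\"older seminorm, yet your space $X_T(u^0)$ contains all $v\in C([0,T],O_\alpha)\cap C_\mu((0,T],E_\xi)$ close to $u^0$, with no H\"older-in-time constraint. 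For such $v$ the operator $U_{A(v)}$ need not exist, so $\Phi_{u^0}$ is not defined on $X_T(u^0)$ as written. The repair is exactly what the paper does: build into the set the condition $\|v(t)-v(s)\|_\beta\le |t-s|^{\rho}$ for a fixed $\rho\in(0,\min\{\alpha-\beta,1-\mu q\})$ (this is where the hypothesis $\alpha>\beta$ enters), observe that this condition is closed under the metric you use (so completeness is retained even though the metric does not control the H\"older seminorm), and then \emph{verify as part of the self-map property} that $\Phi_{u^0}(v)$ again satisfies the same H\"older bound --- an estimate your list of ingredients (a)--(c) omits, and which requires splitting $\Phi_{u^0}(v)(t)-\Phi_{u^0}(v)(s)$ into the three pieces $I_1,I_2,I_3$ and exploiting $1-\mu q>0$ and $1+\gamma-\alpha-\mu q>0$. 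The same omission resurfaces in your uniqueness argument: the hypothesis $\tilde u\in C^\vartheta([0,T],O_\beta)$ with $\vartheta>0$ is precisely what places $\tilde u$ in such a H\"older ball (after shrinking the exponent and $T$), and ``enlarging $r$'' is not needed and would endanger the contraction.

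A secondary imprecision: the smoothing estimate $\|U_{A(v)}(t,\tau)\|_{\mathcal{L}(E_\sigma,E_\theta)}\le M(t-\tau)^{\sigma-\theta}$ with the exact exponent is not available for arbitrary admissible interpolation functors when both indices lie strictly in $(0,1)$; one must concede an arbitrarily small loss, which is why the paper works with auxiliary exponents $\alpha_0<\alpha$ and $\gamma_0<\gamma$. This is harmless here because the inequalities in \eqref{a1e} are strict, but your exponent bookkeeping should be done with $\alpha_0,\gamma_0$ rather than $\alpha,\gamma$. Likewise, your use of the Duhamel identity for $U_{A(v_1)}-U_{A(v_2)}$ implicitly needs the target index $<1$ and the source index $>0$ (this is where $\gamma>0$ and $\xi<1$ are used), which is consistent with the paper's appeal to \eqref{e23} but worth making explicit.
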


\begin{proof}   We devise the proof into several steps.\medskip

\noindent{\em The fixed point formulation.}
Since   $S_\alpha \subset O_\alpha$  is compact in $E_\alpha$ and since $E_\alpha$ embeds continuously into $E_\beta$, we find a constant~$\delta>0$ 
such that  \mbox{$\mathrm{dist}_{E_\beta}(S_\alpha, \partial O_\beta) > 2\delta > 0$}.
Moreover, due to~\eqref{a1c} and \cite[II.~Proposition~6.4]{A83}, $A$ is uniformly Lipschitz continuous on some neighborhood of $S_\alpha$, hence there are $\ve > 0$ and~$L>0$ such that 
$$
\ov\B_{E_\beta}(S_\alpha, 2\ve) 
\subset \mathbb{B}_{E_\beta}(S_\alpha, \delta) \subset O_\beta
$$ 
and
\begin{equation}\label{e4a}
\|A(x) - A(y)\|_{\mathcal{L}(E_1,E_0)} \le L\|x-y\|_\beta\ , \quad x,y \in 
\ov\B_{E_\beta}(S_\alpha, 2\ve)\ . 
\end{equation}
The compactness of $A(S_\alpha)$  in $\mathcal{H}(E_1,E_0)$ implies according to
 \cite[I.~Corollary 1.3.2]{LQPP} that there are~${\kappa\ge 1}$ and $\omega>0$ such that (making $\ve>0$ smaller, if necessary)  
\begin{equation}\label{e4b}
A(x)\in \mathcal{H}(E_1,E_0;\kappa,\omega)\,,\quad x \in \ov\B_{E_\beta}(S_\alpha, 2\ve)\,.
\end{equation}
Recalling~\eqref{a1e} and \eqref{Emb}, we may choose $\alpha_0\in (\beta,\alpha)$ if $\alpha<\xi$ respectively put $\alpha_0:=\xi$ if $\alpha\ge \xi$, choose $\gamma_0\in (0,\gamma)$, and assume that
$\mu$ satisfies
\begin{equation}\label{much}
\xi-\alpha_0<\mu<\min\left\{\frac{1}{q},\frac{1+\gamma_0-\alpha}{q}\right\}\,.
\end{equation}
Fix $\rho\in (0,\min\{\alpha-\beta,1-\mu q\})$. 
Given $T \in (0,1)$ (chosen small enough as specified later on), define a closed subset of $C([0,T], E_\beta)$ by
$$
\mathcal{V}_T := \left\{v \in C([0,T], E_\beta) \,:\, v(t)\in \ov\B_{E_\beta}(S_\alpha, 2\ve)\,,\,\|v(t) - v(s)\|_\beta \le |t-s|^{\rho}\ , ~0 \le s,t \le T\right\}\,.
$$
Hence, if $v\in\mathcal{V}_T$, then \eqref{e4b} ensures 
\begin{subequations}\label{LLP}
\begin{equation}\label{llp}
A(v(t))\in \mathcal{H}(E_1,E_0;\kappa,\omega)\,,\quad t\in [0,T]\,,
\end{equation}
while \eqref{e4a} implies
\begin{equation}\label{llq}
A(v)\in C^\rho\big([0,T],\mathcal{L}(E_1,E_0)\big)\quad\text{with}\quad \sup_{0\le s<t\le T}
\frac{\|  A(v(t))-  A(v(s))\|_{\mathcal{L}(E_1,E_0)}}{( t-s)^\rho}\le L\,.
\end{equation}
\end{subequations} 
For each $v\in\mathcal{V}_T$, the evolution operator
$$
U_{A(v)}(t,s)\,,\quad 0\le s\le t\le T\,,
$$
is thus well-defined  and \eqref{LLP} guarantees that we are 
in a position to use the results of \cite[II.Section~5]{LQPP}.
In particular, due to \cite[II.Lemma~5.1.3]{LQPP} there exists  $c=c(S_\alpha)>0$   such that
\begin{equation}\label{e6a}
\|U_{A(v)}(t,s)\|_{\mathcal{L}(E_\theta)}+(t-s)^{\theta-\vartheta_0}\,\|U_{A(v)}(t,s)\|_{\mathcal{L}(E_\vartheta,E_\theta)}  \le c  \,, \quad 0\le s\le t\le T\,,
\end{equation}
for $0\le \vartheta_0\le \vartheta\le \theta\le 1$ with $\vartheta_0<\vartheta$ if $0<\vartheta< \theta< 1$. 
In the following, we write ${c=c(S_\alpha)}$ for positive constants depending only on~$S_\alpha$ and~${\alpha,\, \beta,\, \gamma,\, \xi,\, \mu,\alpha_0,\, \gamma_0, \,\varepsilon,\, \delta}$, 
 but neither on $v\in\mathcal{V}_T$ nor on~${T\in (0,1)}$. 

We  introduce the complete metric space
$$
\mathcal{W}_T:=\mathcal{V}_T\cap \bar{\mathbb{B}}_{C_\mu((0,T],E_\xi)}(0,1)
$$
equipped with the metric
$$
d_{\mathcal{W}_T}(u,v):=\|u-v\|_{C([0,T],E_\beta)}+\|u-v\|_{C_\mu((0,T],E_\xi)}\,,\quad u,v\in \mathcal{W}_T\,.
$$
Let $u,v\in \mathcal{W}_T$.
Note that $u(t)\in \ov\B_{E_\beta}(S_\alpha, 2\ve) \subset O_\beta$ for $t\in [0,T]$, while $u(t)\in E_\xi$ for $t\in (0,T]$. 
In particular, $u(t)\in O_\xi$ for $t\in (0,T]$. 
Moreover, there is $c=c(S_\alpha)>0$  such that $\|u(t)\|_\beta\le c$ for~${t\in [0,T]}$ and $\|u(t)\|_\xi \le  t^{-\mu}$ for  $t\in (0,T]$. 
It then follows from~\eqref{a1d}  that, for $0<s,t\le T<1$,
\begin{align}\label{e7}
\begin{aligned}
\|f(u(t))-f(u(s))\|_{\gamma} &\le c \big(t^{-\mu q}+s^{-\mu q}\big) \|u(t)-u(s)\|_{\beta}\\[1ex]
&\quad + c \big(t^{-\mu (q-1)}+s^{-\mu (q-1)}\big)\|u(t)-u(s)\|_{\xi} \,,
\end{aligned}
\end{align}
where $c=c(S_\alpha)>0$ is a  constant.
Similarly, fixing $v^0\in  O_\xi$ arbitrarily, we deduce from \eqref{a1d} that
\begin{align}\label{e8}
\|f(u(t))\|_{\gamma}&\le \|f(u(t))-f(v^0)\|_{\gamma} +\|f(v^0)\|_{\gamma}\le  c t^{-\mu q}\,,\quad t\in (0,T]\,,
\end{align}
for some constant $c=c(S_\alpha)>0$. 
Also note that
\begin{align}\label{e9}
\|f(u(t))-f(v(t))\|_{\gamma}\le c t^{-\mu q} \|u(t)-v(t)\|_{\beta}+ c t^{-\mu (q-1)} \|u(t)-v(t)\|_{\xi} \,,\quad t\in (0,T]\,,
\end{align}
where again $c=c(S_\alpha)>0$.
 Set
$$
Q_\alpha:=\mathbb{B}_{E_\alpha}(S_\alpha,\ve/(1+e_{\alpha,\beta}))\subset O_\alpha\,,
$$ 
where $e_{\alpha,\beta}>0$ is the norm of the embedding $E_\alpha\hookrightarrow E_\beta$.
Given $u^0\in Q_\alpha,$ define
\begin{equation}\label{Lambda}
F(u)(t):= U_{A(u)}(t,0)u^0 +\int_0^t U_{A(u)}(t,\tau) f(u(\tau))\,\rd \tau\,,\quad t\in [0,T]\,,\quad u\in\mathcal{W}_T\,.
\end{equation}
 We shall prove that $F:\mathcal{W}_T\rightarrow \mathcal{W}_T$ is a contraction for $T=T(S_\alpha)\in (0,1)$ small enough.  
 To this end, particular attention has to be paid to the singularity of $t\mapsto f(u(t))$ at $t=0$  when  studying the function~$F(u)$.\medskip

\noindent{\em Continuity in $E_\beta$.} To prove the  continuity in $E_\beta$ we note that, for $0\le s<t\le T$, $u\in\mathcal{W}_T$, and~$\theta\in[0,\alpha]$,
\begin{subequations}\label{gfg}
\begin{align}
\|F(u)(t)-F(u)(s)\|_\theta&\le \|U_{A(u)}(t,0)u^0-U_{A(u)}(s,0)u^0\|_{\theta} \nonumber\\
&\quad+\int_0^s \|U_{A(u)}(t,\tau)-U_{A(u)}(s,\tau)\|_{\mathcal{L}(E_\gamma,E_\theta)} \,\|{ f(u(\tau))}\|_\gamma\,\rd \tau \nonumber\\
&\quad 	+\int_s^t \|U_{A(u)}(t,\tau)\|_{\mathcal{L}(E_\gamma,E_\theta)} \,\|{ f(u(\tau))}\|_\gamma\,\rd \tau=:I_1+I_2+I_3\,.
\end{align}
We first note from \eqref{LLP} and \cite[II.Theorem 5.3.1]{LQPP} (with $f=0$ therein) that there exists $c=c(S_\alpha)>0$ with
\begin{align}\label{gfg1}
I_1 \le c (t-s)^{\alpha-\theta}  \|u^0\|_\alpha\,.
\end{align}
Moreover, since 
$$
\|U_{A(u)}(t,s)-1\|_{\mathcal{L}(E_\alpha,E_\theta)}\le c(t-s)^{\alpha-\theta}
$$
due to  \cite[II.~Theorem~5.3.1]{LQPP} and \eqref{LLP}, we use \eqref{e6a} and \eqref{e8} to derive
\begin{align}
I_2 &\le \int_0^s \|U_{A(u)}(t,s)-1\|_{\mathcal{L}(E_\alpha,E_\theta)} \,\|U_{A(u)}(s,\tau)\|_{\mathcal{L}(E_\gamma,E_\alpha)} \|f(u(\tau))\|_\gamma\,\rd \tau\nonumber\\
&\le c (t-s)^{\alpha-\theta}\,\max\Big\{\int_0^s (s-\tau)^{\gamma_0-\alpha} \tau ^{-\mu q}\,\rd \tau,\, \int_0^s  \tau ^{-\mu q}\,\rd \tau\Big\}\nonumber\\
&\le  c\,\max\{T^{1+\gamma_0-\alpha-\mu q},\,T^{1-\mu q}\}\, (t-s)^{\alpha-\theta}\,, \label{gfg2}
\end{align}
since
\[
\int_0^s (s-\tau)^{\gamma_0-\alpha} \tau ^{-\mu q}\,\rd \tau=s^{1+\gamma_0-\alpha-\mu q}\int_0^1 (1-\tau)^{\gamma_0-\alpha} \tau ^{-\mu q}\,\rd \tau\leq T^{1+\gamma_0-\alpha-\mu q} \mathsf{B}(1+\gamma_0-\alpha,1-\mu q)\,,
\]
where $\mathsf{B}$ denotes the Beta function.
Using again \eqref{e6a} with~\eqref{e8}, we obtain similarly  
\begin{align}
I_3 &\le    c\max\Big\{\int_s^t (t-\tau)^{\gamma_0-\theta}  \,\tau^{-\mu q}\,\rd \tau,\, \int_s^t \tau^{-\mu q}\,\rd \tau\Big\} \nonumber\\
&\le  c\max \big\{(t-s)^{1+\gamma_0-\alpha-\mu q}(t-s)^{\alpha-\theta},\,(t-s)^{1-\mu q}\big\}\,.\label{gfg3}
\end{align}
\end{subequations}
 Due to \eqref{much} and $\rho\in (0,\min\{\alpha-\beta,1-\mu q\})$, we see from \eqref{gfg} with  $\theta=\beta$ that   we may choose the constant~${T=T(S_\alpha)\in (0,1)}$ small enough to get
\begin{align}\label{e12}
\|F(u)(t)-F(u)(s)\|_\beta &\le   (t-s)^{\rho}\,,\quad 0\le s\le t\le T\,,
\end{align}
and, since $F(u)(0)=u^0$,
\begin{equation*}
\begin{split}
\|F(u)(t)-u^0\|_\beta &\le  T^{\rho}\le \ve\,,\quad 0\le t\le T\,.
\end{split}
\end{equation*}
In particular, we infer from  $u^0\in Q_\alpha=\mathbb{B}_{E_\alpha}(S_\alpha,\ve/(1+e_{\alpha,\beta}))$ that
\begin{align}\label{e14}
F(u)(t)\in  \ov\B_{E_\beta}(S_\alpha, 2\ve)\,,\quad 0\le t\le T\,,
\end{align}  
 hence $F(u)\in\mathcal{V}_T.$\medskip

\noindent{\em Continuity in $E_\xi$.} We now prove that  $F(u)\in C((0,T], E_1)$, hence in particular $F(u)\in C((0,T], E_\xi)$.
To this end we fix $\ve\in (0,T)$ and set $u_\ve(t):=u(t+\ve)$ for $t\in [0,T-\ve]$. 
Then, in view of~\eqref{ttp}  we have $u_\ve\in C([0,T-\ve],E_\xi)$ and \eqref{a1d} entails that $f(u_\ve)\in C([0,T-\ve],E_\gamma)$.
If
 \[
 U_{A(u_\varepsilon)}(t,s)=U_{A(u)}(t+\varepsilon,s+\varepsilon)\,,\quad 0\leq s\leq t\leq T-\varepsilon\,,
\]
 denotes the evolution operator associated with $A(u_\ve)$, we infer from the definition of $F(u)$ that
\begin{equation}\label{P1d}
F(u)(t+\ve)=U_{A(u_\ve)}(t,0)F(u)(\ve)+\int_0^t U_{A(u_\ve)}(t,s) f(u_\ve(s))\,\rd s\,,\quad t\in [0,T-\ve]\,.
\end{equation}
Applying   \cite[II.Theorem~1.2.2, II.Remarks~2.1.2 (e)]{LQPP}, 
yields $$F(u)( \ve+\cdot) \in  C\big((0,T-\varepsilon],E_1\big)\cap C^1\big((0,T-\ve],E_0\big)$$ for all $\ve\in(0,T)$, hence 
\begin{equation}\label{regdes}
F(u)\in C\big((0,T],E_1\big)\cap C^1\big((0,T],E_0\big)\,.
\end{equation}
Similarly, we derive from~\eqref{e6a},~\eqref{e8},  and the definition of $\alpha_0$ that
\begin{align*}
\|F(u)(t)\|_\xi &\le \|U_{A(u)}(t,0)\|_{\mathcal{L}(E_\alpha,E_\xi)}\, \|u^0\|_{\alpha}+\int_0^t \|U_{A(u)}(t,\tau)\|_{\mathcal{L}(E_\gamma,E_\xi)} \,\|f(u(\tau))\|_\gamma\,\rd \tau\\
&\le   c\big(  t^{\alpha_0-\xi} + \max\big\{ t^{1+\gamma_0-\xi -\mu q},\, t^{1-\mu q}\big\}\big)
\end{align*}
for $t\in (0,T]$, hence
\begin{align}\label{o1}
t^\mu \|F(u)(t)\|_\xi &\le    c\big( t^{\alpha_0-\xi+\mu} + \max\big\{ t^{1+\gamma_0-\xi -\mu( q-1)},\, t^{1-\mu q}\}\big)\,,\quad t\in (0,T]\,.
\end{align}
 Owing to \eqref{much} (noticing that $\xi +\mu(q-1)<\mu q+\alpha<1+\gamma_0$)
 we may make $T=T(S_\alpha)\in (0,1)$ smaller, if necessary, to obtain
\begin{align}\label{e15}
\|F(u)\|_{C_\mu((0,T],E_\xi)} &\le 1\,.
\end{align}
It now follows from \eqref{e12}, \eqref{e14}, and~\eqref{e15} that $F:\mathcal{W}_T \to \mathcal{W}_T$ provided that $T=T(S_\alpha)\in (0,1)$ is small enough. \medskip

\noindent{\em The contraction property.} It remains to show that $F$ is a contraction. To this end, let $u,v\in \mathcal{W}_T$ and observe from \cite[II.Lemma~5.1.4]{LQPP},~\eqref{LLP}, and~\eqref{e4a} that there is  
$c=c(S_\alpha)>0$ such that
\begin{equation}\label{e23}
(t-\tau)^{\vartheta-\eta}\|U_{A(u)}(t,\tau)-U_{A(v)}(t,\tau)\|_{\mathcal{L}(E_\eta,E_\vartheta)}\le c\,\|u-v\|_{C([0,T],E_\beta)}\,,\quad 0\le \tau<t\le T\,,
\end{equation}
provided that $0\le \vartheta<1$ and $0<\eta\le 1$.  Moreover, in view of~\eqref{e9}, we have 
\begin{align}\label{r20}
\|f(u(t))-f(v(t))\|_{\gamma}\le c\, t^{-\mu q}\, d_{\mathcal{W}_T}(u,v) \,,\quad t\in (0,T]\,.
\end{align}
Letting $\theta\in \{\beta,\,\alpha,\,\xi\}$, we infer from  \eqref{r20}, \eqref{e23}, \eqref{e8}, and \eqref{e6a} that, for $t\in (0,T]$,
\begin{align}
\|F(u)(t)-F(v)(t)\|_\theta &\le \|U_{A(u)}(t,0)-U_{A(v)}(t,0)\|_{\mathcal{L}(E_\alpha,E_\theta)}\,\| u^0\|_{\alpha}\nonumber\\
&\quad+\int_0^t \|U_{A(u)}(t,\tau)-U_{A(v)}(t,\tau)\|_{\mathcal{L}(E_\gamma,E_\theta)} \,\|f(v(\tau))\|_\gamma\,\rd \tau\nonumber\\
&\quad 	+\int_0^t \|U_{A(u)}(t,\tau)\|_{\mathcal{L}(E_\gamma,E_\theta)} \,\|f(u(\tau))-f(v(\tau))\|_\gamma\,\rd \tau\nonumber\\
&\le  c\,\|u-v\|_{C([0,T],E_\beta)}\big(\|u^0\|_\alpha t^{\alpha-\theta}+t^{1+\gamma-\theta-\mu q}\big)\nonumber\\
&\quad +  c\,d_{\mathcal{W}_T}(u,v)\, \max\big\{t^{1+\gamma_0-\theta-\mu q}, t^{1-\mu q}\}\,.\label{c111}
\end{align}
Taking $\theta=\beta$ and $\theta=\xi$ in \eqref{c111} implies that
\begin{align*}
d_{\mathcal{W}_T}\big(F(u),F(v)\big) &\le  c\, \big( T^{\alpha-\beta}+T^{1+\gamma_0-\beta-\mu q} +  T^{1-\mu q}+T^{\mu+\alpha-\xi}
+T^{1+\gamma_0-\xi-\mu (q-1)}\big) \,d_{\mathcal{W}_T}(u,v)\,.
\end{align*}
Owing to \eqref{ASS} and \eqref{much}, we may make $T=T(S_\alpha)\in (0,1)$ smaller, if necessary, to obtain that
\begin{align*}
d_{\mathcal{W}_T}(F(u),F(v)) &\le  \frac{1}{2} \,d_{\mathcal{W}_T}(u,v)\,,\quad u,v\in \mathcal{W}_T\,.
\end{align*}
This shows that $F:\mathcal{W}_T \to \mathcal{W}_T$ is a contraction for $T=T(S_\alpha)\in (0,1)$ small enough  and thus has 
a unique fixed point $u=u(\cdot;u^0)\in \mathcal{W}_T$ according to Banach's fixed point theorem.

 Since  the (H\"older) continuity  property
 $u\in C^{\min\{\alpha-\theta,1-\mu q\}}\big([0,T], E_\theta\big)$ is established in \eqref{gfg}  for~${\theta\in[0,\alpha)}$, respectively in~\eqref{gfg} and~\eqref{regdes} for $\theta=\alpha$, 
 it follows from~\eqref{regdes}  that $u$ enjoys the regularity properties~\eqref{regul}
  for   $\mu$ as chosen in \eqref{much} (and, in view of~\eqref{Emb}, also for larger values of~$\mu$).
  The arguments leading to~\eqref{regdes} imply also that $u$ is a strong solution to~\eqref{EE}.
  That the regularity properties~\eqref{regdes} hold for every $\mu >(\xi-\alpha)_+$  may be shown by arguing in a similar manner as   below where the uniqueness claim is established. \medskip

\noindent{\em Lipschitz continuity w.r.t. initial data and uniqueness.} To establish the Lipschitz continuity of the solution with respect to the initial values, let ~${u^0, u^1\in Q_\alpha}$.
 We note for $\theta\in \{\beta,\,\alpha,\,\xi\}$ and $t\in (0,T]$ that 
\begin{align*}
\|u(t;u^0)-u(t;u^1)\|_\theta\leq\|U_{A(u(\cdot;u^1))}(t,0)\|_{\mathcal{L}(E_\alpha,E_\theta)}\|u^0-u^1\|_{\alpha} 
 +\|F(u(\cdot;u^0))(t)-F(u(\cdot;u^1))(t)\|_\theta\,.
\end{align*}
In view of \eqref{e6a}, we get
\[
\|U_{A(u(\cdot;u^1))}(t,0)\|_{\mathcal{L}(E_\alpha,E_\theta)}\leq c\left\{
\begin{array}{lll}
1&,& \theta\in\{\beta,\,\alpha\},\\
 t^{\alpha_0-\theta}&,&\theta=\xi\,,
\end{array}
\right.
\]
while \eqref{c111} yields
\begin{align*}
\| F(u(\cdot;u^0))(t)-F(u(\cdot;u^1))(t)\|_\theta &\le  c  \|u(\cdot;u^0)-u(\cdot;u^1)\|_{C([0,T],E_\beta)} \big(t^{\alpha-\theta}+t^{1+\gamma-\theta-\mu q}\big)  \\
&\quad +  c\,d_{\mathcal{W}_T}(u(\cdot;u^0),u(\cdot;u^1))\,  \max\big\{t^{1+\gamma_0-\theta-\mu q}, t^{1-\mu q}\}  \,. 
\end{align*}
Hence, taking first $\theta=\beta$ and $\theta=\xi$  and making $T=T(S_\alpha)\in (0,1)$ smaller, if necessary, we derive
$$
d_{\mathcal{W}_T}\big(u(\cdot;u^0),u(\cdot;u^1)\big)\le c\,  \|u^0-u^1\|_{\alpha}
$$
and then, with $\theta=\alpha$, we deduce that indeed
$$
\|u(t;u^0)-u(t;u^1)\|_\alpha \le c_0\,  \|u^0-u^1\|_{\alpha}\,,\quad u^0, u^1\in Q_\alpha\,,
$$
for some constant $c_0=c_0(S_\alpha)>0$.

Concerning the uniqueness claim, let $\tilde u$ be a solution to \eqref{EE} with initial data $u^0\in Q_\alpha$ with regularity stated in~\eqref{util}. 
We may assume that $q(\xi-\alpha)_+<\nu<\min\{1,1+\gamma-\alpha\}$ due to \eqref{Emb}. Let $u=u(\cdot;u^0)$.
 We choose 
\[
\wh \rho\in\big(0,\min\{\rho,\, \vartheta\}\big)\qquad\text{and}\qquad \wh \mu\in\Big(\max\{\mu,\, \nu\},\,\frac{1+\gamma-\alpha}{q}\Big)
\]
and note that, if $T$ is sufficiently small, then both functions $u$ and  $\tilde u$ belong to the complete metric space $\mathcal{W}_T$ (with $(\rho,\mu)$ replaced by $(\wh \rho,\wh \mu)$).
The uniqueness claim follows now by arguing as in the first part of the proof where the existence of a solution was established.
\end{proof}


\begin{rem}\label{Rem1}
An inspection of the proof of Proposition~\ref{P1} shows that
the H\"older continuity in time and the assumption~$\alpha>\beta$ are only needed to ensure~\eqref{llq} while the assumption~${\gamma>0}$  and~${\xi<1}$ is
  only used when applying formula~\eqref{e23} to derive~\eqref{c111}. 
  That is, these assumptions are required to handle the quasilinear part and can thus be weakened for
   the semilinear problem~\eqref{SC1SEEE}; see the subsequent proof of Proposition~\ref{XP1}.
\end{rem}


\subsection*{Proof of Theorem~\ref{T1}} 


{\bf (i), (ii) Existence and Uniqueness:}
Due to Proposition~\ref{P1}, the Cauchy problem \eqref{EE}
admits for each  $u^0\in O_\alpha=O_\beta\cap E_\alpha$  a unique local strong solution. By standard arguments it can be extended to 
a maximal strong solution $u(\cdot;u^0)$ on the maximal interval of existence $[0,t^+(u^0))$.
 The regularity properties of $u(\cdot;u^0)$  
as stated in part (i) of Theorem~\ref{T1} and the uniqueness claim stated  in part (ii) also follow from Proposition~\ref{P1}.\\

\noindent{\bf (iii) Continuous dependence:} Let $u^0\in O_\alpha$ and choose $t_0\in (0,t^+(u^0))$ arbitrarily.
 Fixing~${t_*\in (t_0,t^+(u^0))}$, the set $S_\alpha:=u([0,t_*];u^0)\subset O_\alpha$ is compact. Thus, we infer from Proposition~\ref{P1} that
 there are $\ve=\ve(S_\alpha)>0$, $T=T(S_\alpha)>0$, and $c_0=c_0(S_\alpha)\ge 1$  such that $T<t^+(u^1)$ for any
 $$u^1\in Q_\alpha=\mathbb{B}_{E_\alpha}(S_\alpha,\ve/(1+e_{\alpha,\beta}))$$ and
  \begin{align}\label{FE2}
\|u(t;u^1)-u(t; u^2)\|_{\alpha}\leq c_0 \|u^1- u^2\|_\alpha\,,\quad 0\le t\le T\,,\quad u^1, u^2\in Q_\alpha\,.
\end{align}
Given $N\ge 1$ with $(N-1)T< t_*\le NT$ define by $V_\alpha:=\mathbb{B}_{E_\alpha}(u^0,\ve_0)$ with 
  $\ve_0:=\ve/((1+e_{\alpha,\beta})c_0^{N-1})$ 
 a neighborhood of $u^0$  in $Q_\alpha$. We then claim that
there is $k_0\geq1$ with
 \begin{itemize}
 \item[(1)] $t_*<t^+(u^1)$ for each $u^1\in V_\alpha$,
  \item[(2)]  $\|u(t;u^1)-u(t;u^0)\|_{ \alpha} \leq k_0 \|u^1- u^0\|_\alpha$ for $0\le t\le t_*$ and $u^1\in V_\alpha$.
 \end{itemize}
Indeed, let $u^1\in V_\alpha$. For $t_*\le T$, this is exactly the above statement. 
If otherwise $T<  t_*$, then we have $u(T;u^0)\in S_\alpha$ and the estimate (see \eqref{FE2})
$$
\|u(t;u^1)-u(t; u^0)\|_{\alpha}\leq c_0 \|u^1- u^0\|_\alpha <\frac{\ve}{1+e_{\alpha,\beta}}\,,\quad 0\leq t\leq T\,,
$$
entails $u(T;u^1)\in Q_\alpha$. Thus $T<t^+(u(T;u^i))$ for $i=0,1$, while the uniqueness
of solutions to \eqref{EE} ensures that $u(t;u(T;u^i))=u(t+T;u^i)$, $0\le t\le T$.
Therefore, it follows from \eqref{FE2} that
$$
\|u(t+T;u^1)-u(t+T; u^0)\|_{\alpha}{ \leq c_0\|u(T;u^1)-u(T; u^0)\|_{\alpha}}\leq  c_0^2 \|u^1- u^0\|_\alpha \,,\quad 0\le t\le T\,.
$$
Now, if  $N=2$ we are done. Otherwise we proceed to derive (1) and (2) after finitely many iterations. 
In particular, property (1) implies that $(0,t_*)\times V_\alpha$ is a neighborhood of $(t_0,u^0)$ in 
$$
\mathcal{D}=\{(t,w)\,:\, 0\le t<t^+(w)\,,\,w\in O_\alpha\}\,.
$$ 
This along with (2) implies the solution map defines a semiflow on $O_\alpha$.\\

 

\noindent{\bf (iv)  Global existence:} Since the solution map defines a semiflow in $O_\alpha$, we have $t^+(u^0)=\infty$ whenever  the orbit $u([0,t^+(u^0));u^0)$ is relatively compact on $O_\alpha$.
This is part (iv) of Theorem~\ref{T1}.\\

\noindent{\bf (v)  Blow-up criterion:} Let $u^0\in O_\alpha$ with $t^+(u^0)<\infty$.
Assume that $u(\cdot;u^0):[0,t^+(u^0))\to E_\alpha$ is uniformly continuous but \eqref{1a} was not true. 
Then, the limit $\lim_{t\nearrow t^+(u^0)}u(t;u^0)$ exists in $O_\alpha,$ so that~$u([0,t^+(u^0));u^0)$ is relatively compact in $O_\alpha$, which contradicts (iv) of Theorem~\ref{T1}. 
This entails  $(a)$ from  Theorem~\ref{T1}~{ (v)}.

As for {part  $(b)$  of} Theorem~\ref{T1}~{ (v)}, let $E_1$ be compactly embedded in $E_0$. 
Assume for contradiction that~\eqref{1} was not true for some $\theta\in (\beta,1)$ with $(\xi-\theta)q<\min\{1,1+\gamma-\theta\}$. 
Without loss of generality we may assume that $\theta>\alpha$ (otherwise consider $\alpha_0\in (\beta,\theta)$ with $(\xi-\alpha_0)q<\min\{1,1+\gamma-\alpha_0\}$). 
Since then $E_{\theta}$
embeds compactly in $E_\alpha$, we may find a sequence $t_n\nearrow t^+(u^0)$ such that $(u(t_n))_n$ converges in $E_\alpha$ and its limit lies in $O_\alpha$. 
Using Proposition~\ref{P1} with $S_\alpha$ defined as the closure in $E_\alpha$ of the set $\{u(t_n)\,:\, n\in\N\}$, which is a compact subset of $O_\alpha$, we may extend the maximal solution. 
 This is a contradiction.\qed



\section{Proof of Theorem~\ref{XT:1}}\label{Sec2x}


 The proof of Theorem~\ref{XT:1} is similar to the proof of Theorem~\ref{T1}  with
  some modifications  which are necessary to adapt to the weaker assumptions on the nonlinearity $f$. The analogue of Proposition~\ref{P1} reads as follows:

\begin{prop}\label{XP1}
 Suppose~\eqref {V16} and let $R>0$.
Then, there exists~${T:=T(R)>0}$ such that, for each $u^0\in E_\alpha$ with $\|u^0\|_\alpha\le R$,
the problem \eqref{SC1SEEE} has a strong solution 
\begin{equation}\label{XSC1regul}
\begin{aligned}
u=u(\cdot;u^0)\in &\ C^1\big((0,T],E_0\big)\cap C\big((0,T],E_1\big)\cap C \big([0,T],E_\alpha\big) \\[1ex]
&\cap  C^{\min\{\alpha-\theta,\,{  (1-\mu q)_+}\}}\big([0,T], E_\theta\big)\cap  C_\mu\big((0,T],E_\xi\big)
\end{aligned}
\end{equation}
for any $\theta\in [0,\alpha]$ and $\mu>\xi-\alpha$. Moreover, there is a constant $c_0(R)>0$ such that
\begin{equation}\label{d3}
\| u(t;u^0)-u(t;u^1)\|_\alpha\le c_0(R)\, \| u^0-u^1\|_\alpha\,,\qquad 0\le t\le T\,,\quad u^0, u^1\in \bar{\mathbb{B}}_{E_\alpha}(0,R)\,.
\end{equation}
 Finally, if  $$\tilde u\in C^1\big((0,T],E_0\big)\cap C\big((0,T],E_1\big)\cap C  \big([0,T],E_{ \alpha}\big) \cap C_\nu\big((0,T],E_\xi\big)$$   with  $\nu\geq 0$  and $q\nu<\min\{ 1,1+\gamma-\alpha\}$
 is a solution to  \eqref{SC1SEEE} satisfying $\tilde u (0)=u^0 \in \bar{\mathbb{B}}_{E_\alpha}(0,R)$, then~${\tilde u=u(\cdot;u^0)}$.
\end{prop}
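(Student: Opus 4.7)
The plan is to follow the fixed-point scheme of Proposition~\ref{P1} verbatim, with two simplifications. First, since the linear part $A=A(t)$ does not depend on $u$, the delicate bounds \eqref{e23} for differences of evolution operators disappear. Second, as noted in Remark~\ref{Rem1}, this lets one dispense with the auxiliary space $E_\beta$ and work directly in $C([0,T],E_\alpha)$, and it also permits the edge cases $\gamma=0$ or $\xi=1$. Using \eqref{16b}, I choose $\mu>\xi-\alpha$ with $\mu q<\min\{1,1+\gamma-\alpha\}$ and, if $\gamma>0$, pick $\gamma_0\in(0,\gamma)$ retaining the same inequality with $\gamma$ replaced by $\gamma_0$; if $\gamma=0$ I set $\gamma_0:=0$ and rely on $\xi<1$. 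Let $U_A(t,s)$ denote the parabolic evolution operator associated with $A\in C^\rho(\R^+,\mathcal{H}(E_1,E_0))$. For $u^0\in\bar{\mathbb{B}}_{E_\alpha}(0,R)$ and $T\in(0,1]$ to be shrunk later, I work on the complete metric space
\[
\mathcal{W}_T:=\bigl\{v\in C([0,T],E_\alpha)\cap C_\mu((0,T],E_\xi)\,:\,v(0)=u^0,\ \|v\|_{C([0,T],E_\alpha)}\le M(R),\ \|v\|_{C_\mu((0,T],E_\xi)}\le 1\bigr\}
\]
(with $M(R):=2R\sup_{0\le t\le 1}\|U_A(t,0)\|_{\mathcal{L}(E_\alpha)}$), equipped with the metric $d_{\mathcal{W}_T}(u,v):=\|u-v\|_{C([0,T],E_\alpha)}+\|u-v\|_{C_\mu((0,T],E_\xi)}$, and with the map
\[
F(u)(t):=U_A(t,0)u^0+\int_0^t U_A(t,\tau)\,f(u(\tau))\,\rd\tau,\qquad t\in[0,T].
\]

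For $v\in\mathcal{W}_T$ the bound $\|v(\tau)\|_\xi\le\tau^{-\mu}$ combined with \eqref{Xa1d} yields $\|f(v(\tau))\|_\gamma\le c(R)\tau^{-\mu q}$ and
\[
\|f(u(\tau))-f(v(\tau))\|_\gamma\le c(R)\bigl(\tau^{-\mu q}\|u(\tau)-v(\tau)\|_\alpha+\tau^{-\mu(q-1)}\|u(\tau)-v(\tau)\|_\xi\bigr),
\]
in analogy with \eqref{e8}--\eqref{e9}. Inserting these into $F(u)$ and invoking the smoothing estimates $\|U_A(t,\tau)\|_{\mathcal{L}(E_\gamma,E_\theta)}\le c(t-\tau)^{\gamma_0-\theta}$ from \cite[II.Lemma~5.1.3]{LQPP} together with the Beta-function computations that led to \eqref{gfg}--\eqref{o1}, one obtains for $\theta\in\{\alpha,\xi\}$
\[
\|F(u)(t)-U_A(t,0)u^0\|_\theta\le c(R)\max\bigl\{t^{1+\gamma_0-\theta-\mu q},\,t^{1-\mu q}\bigr\}
\]
and the corresponding contraction bound $d_{\mathcal{W}_T}(F(u),F(v))\le c(R)\,T^{\varepsilon_0}\,d_{\mathcal{W}_T}(u,v)$ for some $\varepsilon_0>0$ dictated by \eqref{16b}. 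Shrinking $T=T(R)\in(0,1]$ accordingly makes $F$ a strict contraction of $\mathcal{W}_T$, and Banach's theorem yields the unique fixed point $u=u(\cdot;u^0)$.

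The regularity claim \eqref{XSC1regul} together with the strong-solution property follows from the representation formula via the shift~\eqref{P1d} and \cite[II.Theorem~1.2.2,\,II.Remarks~2.1.2~(e)]{LQPP}, while the Hölder exponents in $E_\theta$ are read off from the same computation as \eqref{gfg}. The Lipschitz estimate \eqref{d3} is immediate from the $\theta=\alpha$ case of the above bounds applied to $F(u(\cdot;u^0))-F(u(\cdot;u^1))$, since only the linear term $U_A(\cdot,0)(u^0-u^1)$ and the $f$-Lipschitz contribution remain (there is no $U_{A(u)}-U_{A(v)}$ term). Uniqueness under the weaker assumption on $\tilde u$ proceeds as at the end of the proof of Proposition~\ref{P1}: enlarging $\mu$ to some $\hat\mu\in(\max\{\mu,\nu\},(1+\gamma-\alpha)/q)$ places both $u$ and $\tilde u$ in the $\mathcal{W}_T$ built with $\hat\mu$ for $T$ small, whence fixed-point uniqueness applies. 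The only real subtlety is the parameter choice in the edge cases allowed by \eqref{16b}: when $\gamma=0$ one cannot take $\gamma_0<\gamma$, but $\xi<1$ keeps $(t-\tau)^{-\xi}$ locally integrable; when $\xi=1$ one has $\gamma>0$ and picks $\gamma_0\in(0,\gamma)$; the exclusion $(\gamma,\xi)\neq(0,1)$ guarantees that one of these two options is always available, so the above estimates are uniformly valid.
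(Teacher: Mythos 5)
Your scheme is the paper's: the same fixed-point map $F(u)(t)=U_A(t,0)u^0+\int_0^t U_A(t,\tau)f(u(\tau))\,\rd\tau$ on a bounded subset of $C([0,T],E_\alpha)\cap C_\mu((0,T],E_\xi)$ (the paper centres its ball at $U_Au^0$ rather than imposing $v(0)=u^0$ plus norm bounds, but that is cosmetic), the same estimates on $f$ derived from \eqref{Xa1d}, the same self-map and contraction bounds, and the same uniqueness argument via enlarging $\mu$. Two small technical remarks: (1) for $0<\alpha<\xi<1$ the smoothing estimate only gives $\|U_A(t,0)\|_{\mathcal{L}(E_\alpha,E_\xi)}\le ct^{\alpha_0-\xi}$ with a strict loss $\alpha_0<\alpha$, so after fixing $\mu>\xi-\alpha$ you should choose $\alpha_0\in(\xi-\mu,\alpha)$ to make $t^\mu\|U_A(t,0)u^0\|_\xi\to0$; this is a one-line fix and the paper does exactly this.

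There is, however, one genuine gap: the passage from the fixed point (a mild solution) to a \emph{strong} solution in the case $\gamma=0$, which the hypotheses \eqref{16b} explicitly permit. You invoke \cite[II.Theorem~1.2.2, II.Remarks~2.1.2~(e)]{LQPP} for this step, but that result requires the inhomogeneity to take values continuously in an intermediate space $E_\gamma$ with $\gamma>0$; when $\gamma=0$ one only knows $f(u(\cdot+\ve))\in C([0,T-\ve],E_0)$, and mere $E_0$-continuity of the forcing does not yield $C^1((0,T],E_0)\cap C((0,T],E_1)$ regularity. Your closing remark about the edge cases only addresses the integrability of the kernel $(t-\tau)^{-\xi}$ in the fixed-point estimates, not this regularity upgrade. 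The paper closes the gap by a short bootstrap: since $\xi<1$ when $\gamma=0$, one first shows $u_\ve:=u(\cdot+\ve)\in C([0,T-\ve],E_\theta)$ for some $\theta\in(\xi,1)$, deduces $u_\ve\in C^{\theta-\xi}([0,T-\ve],E_\xi)$ from \cite[II.Theorem~5.3.1]{LQPP} applied to the shifted representation \eqref{P1d}, hence $f(u_\ve)\in C^{\theta-\xi}([0,T-\ve],E_0)$ by \eqref{Xa1d}, and only then applies \cite[II.Theorem~1.2.1]{LQPP}, which accepts H\"older-in-time forcing with values in $E_0$. You need this extra step (or an equivalent one) to cover $\gamma=0$; for $\gamma>0$ your citation is adequate and the rest of the argument stands.
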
 
\begin{proof} 

\noindent{\bf (i)} 
 Let  $U_{A}(t,s)$, $ 0\le s\le t$, be the evolution operator associated with $A\in C^\rho(\R^+,\mathcal{H}(E_1,E_0))$ and recall from \cite[II.Lemma~5.1.3]{LQPP} 
 that there exists  a constant $c>0$  such that
\begin{equation}\label{XSC1e6}
\|U_{A}(t,s)\|_{\mathcal{L}(E_\theta)} +(t-s)^{\theta-\vartheta_0}\|U_{A}(t,s)\|_{\mathcal{L}(E_\vartheta,E_\theta)}  \le c  \,, \quad 0\le s\le t\le 1\,,
\end{equation}
for $0\le \vartheta_0\le \vartheta\le \theta\le 1$ with $\vartheta_0<\vartheta$ if $0<\vartheta< \theta< 1$. 
 Recalling~\eqref{16b} and \eqref{Emb}, we may choose 
 a  positive constant $\mu$ such that 
\begin{equation}\label{Xconstants}
\xi-\alpha_0<\mu<\min\left\{\frac{1}{q},\frac{1+\gamma_0-\alpha}{q}\right\}\,,
\end{equation}
 for   appropriate $\alpha_0\in (0,\alpha)$ if $\alpha>0$, respectively  $\alpha_0:=0$ if $\alpha=0$ and, similarly,   $\gamma_0\in (0,\gamma)$ if $\gamma>0$, respectively~$\gamma_0:=0$ if~$\gamma=0$.
 
 We then define for $T \in (0,1)$ the Banach space
$$
\mathcal{W}_T:=C([0,T],E_\alpha) \cap C_\mu((0,T],E_\xi)\,.
$$
Given $u^0\in E_\alpha$ with $\|u^0\|_\alpha\le R,$ we set $$U_Au^0:=[t\mapsto U_A(t,0)u^0]$$ and deduce from \cite[II.Theorem~5.3.1]{LQPP}, \eqref{XSC1e6}, and \eqref{Xconstants}
that $U_Au^0\in \mathcal{W}_T$ with
$\|U_Au^0\|_{\mathcal{W}_T}\le c(R)$ for some $c(R)>0$.
Consequently, if $u\in \bar{\mathbb{B}}_{\mathcal{W}_T}\big(U_Au^0,1\big)$, then 
$$
\|u(t)\|_{ \alpha}+t^{\mu}\|u(t)\|_\xi \le c(R)\,,\quad t\in (0,T]\,,
$$
and it follows from~\eqref{Xa1d}  that, for $0<s,t\le T$,
\begin{align}\label{XSC1e7}
\begin{aligned}
\|f(u(t))-f(u(s))\|_{\gamma} &\le c(R) \big(t^{-\mu q}+s^{-\mu q}\big) \|u(t)-u(s)\|_{{ \alpha}}\\
&\quad + c(R) \big(t^{-\mu (q-1)}+s^{-\mu (q-1)}\big)\|u(t)-u(s)\|_{\xi} \,.
\end{aligned}
\end{align}
Similarly, we deduce from \eqref{Xa1d} that
\begin{align}\label{XSC1e8}
\|f(u(t))\|_{\gamma}&\le \|f(u(t))-f(0)\|_{\gamma} +\| f(0)\|_{\gamma}\le  c(R) t^{-\mu q}\,,\quad t\in (0,T]\,.
\end{align}
Also note for $u,v\in \bar{\mathbb{B}}_{\mathcal{W}_T}\big(U_Au^0,1\big)$ that
\begin{align}\label{XSC1e9}
\|f(u(t))-f(v(t))\|_{\gamma}\le c(R) t^{-\mu q} \|u(t)-v(t)\|_{{ \alpha}}+ c(R) t^{-\mu (q-1)} \|u(t)-v(t)\|_{\xi} \,,\quad t\in (0,T]\,.
\end{align}
Define now
\begin{equation}\label{XSC1Lambda}
F(u)(t):= U_{A}(t,0)u^0 +\int_0^t U_{A}(t,\tau) f(u(\tau))\,\rd \tau\,,\quad t\in [0,T]\,,\quad u\in \bar{\mathbb{B}}_{\mathcal{W}_T}\big(U_Au^0,1\big)\,.
\end{equation}
We claim that $F:\bar{\mathbb{B}}_{\mathcal{W}_T}\big(U_Au^0,1\big)\rightarrow \bar{\mathbb{B}}_{\mathcal{W}_T}\big(U_Au^0,1\big)$
 defines a contraction if $T=T(R)\in (0,1)$ is small enough.
\medskip

\noindent{\bf (ii)}
 Given  $u\in \bar{\mathbb{B}}_{\mathcal{W}_T}\big(U_Au^0,1\big)$ we first note, as in the proof of Proposition~\ref{P1}, 
 that $$u_\ve:=u(\cdot+\ve)\in C([0,T-\ve],E_\xi)\,,\quad f(u_\ve)\in C([0,T-\ve],E_0)$$ for every $\ve\in (0,T)$ so  that \cite[II.Theorem~5.3.1]{LQPP} and (the analogue of) \eqref{P1d}  yield
\begin{equation} \label{Xregdes2}
F(u)\in C\big((0,T],E_\theta\big)\,,\quad \theta\in (0,1)\,.
\end{equation}
 Analogously to~\eqref{o1} we may use~\eqref{XSC1e6}, \eqref{Xconstants},~\eqref{XSC1e8},  and \eqref{Xregdes2}  (noticing that $(\gamma,\xi)\not=(0,1)$)
 to obtain $F(u)\in C_\mu((0,T],E_\xi)$ with 
\begin{align}\label{est2}
\|F(u)-U_{A }u^0\|_{C_\mu((0,T],E_\xi)} &\le \frac{1}{2}
\end{align}
provided that $T=T(R)\in (0,1)$ is sufficiently small. Moreover, analogously to \eqref{gfg}, we deduce that~$F(u)\in C^{\min\{\alpha-\theta,1-\mu q\}}\big([0,T], E_\theta\big)$ for all $\theta\in[0,\alpha]$ and
\begin{equation}\label{est1}
\left\| F(u)- U_Au^0\right\|_{C([0,T],E_{ \alpha})}\le \frac{1}{2}, 
\end{equation}
by making $T=T(R)\in (0,1)$ smaller, if necessary. 
Gathering \eqref{est2} and \eqref{est1} we obtain that  the map~\mbox{$F:\bar{\mathbb{B}}_{\mathcal{W}_T}\big(U_Au^0,1\big) \to \bar{\mathbb{B}}_{\mathcal{W}_T}\big(U_Au^0,1\big)$}  is well-defined for~$T=T(R)\in (0,1)$   small enough. 
Furthermore, using \eqref{XSC1e6}, \eqref{XSC1e9},  and the  assumption $(\gamma,\xi)\not=(0,1)$,  we may show analogously to~\eqref{c111}  that 
\begin{align}
\|F(u)(t)-F(v)(t)\|_\theta 
&\le    c(R) \,d_{\mathcal{W}_T}(u,v)\,  \max\big\{t^{1+\gamma_0-\theta-\mu q}, t^{1-\mu q}\} ,\,\qquad t\in (0,T]  \,,\label{XSC1c111}
\end{align}
for $\theta\in \{\alpha,\,\xi\}$  and $u,v\in \bar{\mathbb{B}}_{\mathcal{W}_T}\big(U_Au^0,1\big)$. 
Recalling~\eqref{Xconstants}, we may choose~${T=T(R)\in (0,1)}$ sufficiently small to ensure that~
$F:\bar{\mathbb{B}}_{\mathcal{W}_T}\big(U_Au^0,1\big)\to \bar{\mathbb{B}}_{\mathcal{W}_T}\big(U_Au^0,1\big)$  is a contraction.
Thus,~$F$ has a unique fixed point $u=u(\cdot;u^0)\in \bar{\mathbb{B}}_{\mathcal{W}_T}\big(U_Au^0,1\big)$. 
 \medskip

\noindent{\bf (iii)}    In order to show that $u$ is a strong solution to \eqref{SC1SEEE} with regularity  \eqref{XSC1regul}, we handle the cases~$\gamma>0$ and~$\gamma=0$ separately.
 If $\gamma>0$, then $u\in C\big((0,T],E_1\big)\cap C^1\big((0,T],E_0\big)$ follows as in \eqref{regdes} from \cite[II.Theorem~1.2.2, II.Remarks~2.1.2 (e)]{LQPP}  and $u$ is thus a strong solution. 
 If~$\gamma=0$, then~${\xi<1}$ by assumption.
  We  consider again~$u_\ve:=u(\cdot+\ve)$ for~$\ve\in (0,T)$  and note that $u_\ve\in C([0,T-\ve],E_\theta)$ for each $\theta\in (0,1)$, see~\eqref{Xregdes2}. 
   Taking~$\theta\in (\xi,1)$, we then have $u_\ve(0)\in E_\theta$ and $f(u_\ve)\in C([0,T-\ve],E_0)$ so that \cite[II.Theorem~5.3.1]{LQPP}  together with   (the analogue of)~\eqref{P1d} 
   ensure   $u_\ve\in C^{\theta-\xi}([0,T-\ve],E_\xi)$. 
  Along with~\eqref{Xa1d} we conclude~$f(u_\ve)\in C^{\theta-\xi}([0,T-\ve],E_0)$. 
   Invoking now \cite[II.Theorem~1.2.1]{LQPP} we deduce $$u_\ve\in   C\big((0,T-\varepsilon],E_1\big)\cap C^1\big((0,T-\ve],E_0\big)$$
for each $\ve\in (0,T)$, hence $u$ is a strong solution to \eqref{SC1SEEE} enjoying the regularity properties~\eqref{XSC1regul} (for the constant $\mu$ fixed in Step 1).
That \eqref{XSC1regul} holds for every $\mu >\xi-\alpha$ follows   from the uniqueness property. 
\medskip

\noindent{\bf (iv)} To establish the Lipschitz continuity  with respect to the initial values, let~$u^0, u^1\in \bar{\mathbb{B}}_{E_\alpha}(0,R)$ and note for $\theta\in \{\alpha,\,\xi\}$ and $t\in (0,T]$ that
\begin{align*}
\|u(t;u^0)-u(t;u^1)\|_\theta\leq\|U_{A}(t,0)\|_{\mathcal{L}(E_\alpha,E_\theta)}\|u^0-u^1\|_{\alpha} 
 +\|F(u(\cdot;u^0))(t)-F(u(\cdot;u^1))(t)\|_\theta\,.
\end{align*}
Hence, taking  $\theta=\alpha$ and $\theta=\xi$  and using ~\eqref{XSC1e6} and~\eqref{XSC1c111}, we may make $T=T(R)\in (0,1)$ smaller, if necessary, to deduce that
$$
\|u(t;u^0)-u(t;u^1)\|_\alpha \le c_0(R)\,  \|u^0-u^1\|_{\alpha}\,,\quad u^0, u^1\in  \bar{\mathbb{B}}_{E_\alpha}(0,R)\,,\quad t\in [0,T]\,,
$$
for some constant $c_0(R)>0$.\medskip

\noindent{\bf (v)}  The uniqueness assertion is derived as in Proposition~\ref{P1}. 
\end{proof}

 The proof of Theorem~\ref{XT:1} now follows easily:

\subsection*{Proof of Theorem~\ref{XT:1}}

 The existence and uniqueness of a maximal strong solution to the Cauchy problem \eqref{SC1SEEE} 
 follows by standard arguments from   Proposition~\ref{XP1}.  Since $T=T(R)>0$ in Proposition~\ref{XP1} depends only on $R$, the blow-up criterion~\eqref{X1} readily follows when $t^+(u^0)<\infty$.
 Moreover, if $A(t)=A\in \mathcal{H}(E_1,E_0)$ for $t\geq 0$, then~\eqref{d3} implies as in the proof of Theorem~\ref{T1} that the  map~${(t,u^0)\mapsto u(t;u^0)}$ is a semiflow on~$E_\alpha$. 
 This then also ensures global existence for relatively compact orbits. $\qed$

\section{Basic Preliminaries}\label{Sec3}

In this section we collect some general results which will be used in the applications presented in  Section~\ref{Sec4} and Section~\ref{Sec6}.  By $W_{p}^{s}(\Omega)$ and $H_{p}^{s}(\Omega)$ for $s\in\R$ we denote the Sobolev-Slobodeckii spaces and the Bessel potential spaces, respectively \cite{Amann_Teubner,Tr78}.

\subsection{An Auxiliary Lemma}

The following auxiliary lemma about Nemitskii operators is in the spirit of \cite[Proposition~15.4]{Amann_ASNSP_84}  (see also \cite[Lemma~2.7]{WalkerEJAM}) and may be useful in certain applications when verifying assumption~\eqref{a1d} (see e.g. Section~\ref{Sec6}).

\begin{lem}\label{g-Lemma}
Let $n,d\in \N^*$ and let $\Omega$ be an open subset of $\R^n$. Consider $g\in C^1(\R^d,\R)$ with
\begin{equation}\label{key}
\vert \nabla g(r)-\nabla g(s)\vert \le c\big(1+\vert r\vert^{q-1}+\vert s\vert^{q-1}\big) \vert r-s\vert  \,,\quad r,s\in\R^d\,,
\end{equation}
for some constants $q\ge 1$ and $c>0$. 
Let $p\in [1,\infty)$ and $\mu\in (0,1)$.  Then $g(w)\in W_{p}^{\mu}(\Omega)$ for every~${w\in W_{p}^{\mu}(\Omega,\R^d)\cap L_\infty (\Omega,\R^d)}$.
 Moreover,
there is $K>0$ with
\begin{equation*}
    \begin{split}
\|g(w_1)-g(w_2)\|_{W_{p}^\mu}
     &\le  K \big(1+\| w_1\|_{\infty}^{q}+\| w_2\|_{\infty}^{q}\big) \| w_1-w_2\|_{W_{p}^\mu}\\
&\quad
+ K \big(1+\| w_1\|_{\infty}^{q-1}+\| w_2\|_{\infty}^{q-1}\big) \big(\| w_1\|_{W_{p}^\mu}+\| w_2\|_{W_{p}^\mu}\big) \| w_1-w_2\|_{\infty}
    \end{split}
    \end{equation*}
for  all $w_1,\, w_2\in W_{p}^{\mu}(\Omega,\R^d)\cap L_\infty (\Omega,\R^d)$.
\end{lem}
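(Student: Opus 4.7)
The plan is to reduce the lemma to a telescoping decomposition based on the fundamental theorem of calculus that respects the asymmetric form of the target inequality. The first ingredient is that, specializing~\eqref{key} to $s=0$, one obtains the pointwise bound $|\nabla g(r)|\le C(1+|r|^q)$, so that $\nabla g$ at any point is controlled by the $L_\infty$-norm of its argument.

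For the $L_p$-part of the Sobolev--Slobodeckii norm of $g(w_1)-g(w_2)$, I would apply the identity $g(a)-g(b)=\int_0^1\nabla g(b+t(a-b))\cdot(a-b)\,\rd t$ pointwise at $a=w_1(x)$, $b=w_2(x)$. The growth bound then yields
\[
|g(w_1(x))-g(w_2(x))|\le C\big(1+\|w_1\|_\infty^q+\|w_2\|_\infty^q\big)\,|w_1(x)-w_2(x)|\,,
\]
and integration, combined with $\|w_1-w_2\|_{L_p}\le\|w_1-w_2\|_{W_p^\mu}$, produces a contribution absorbed by the first term of the stated inequality.

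The heart of the argument is the Gagliardo seminorm part, which I would handle via the telescoping identity
\begin{align*}
g(w_1(x))-g(w_2(x)) -g(w_1(y))+g(w_2(y)) &= T_1(x,y)+T_2(x,y)\,,
\end{align*}
where, writing $\phi_i(x,y,t):=tw_i(x)+(1-t)w_i(y)$, one sets
\begin{align*}
T_1 &:=\int_0^1 \nabla g(\phi_1)\,\rd t\cdot\big[(w_1-w_2)(x)-(w_1-w_2)(y)\big]\,,\\
T_2 &:=\int_0^1\big[\nabla g(\phi_1)-\nabla g(\phi_2)\big]\,\rd t\cdot\big(w_2(x)-w_2(y)\big)\,.
\end{align*}
The growth bound on $\nabla g$ yields $|T_1|\le C(1+\|w_1\|_\infty^q+\|w_2\|_\infty^q)\,|(w_1-w_2)(x)-(w_1-w_2)(y)|$; division by $|x-y|^{n/p+\mu}$ and integration produce the Gagliardo seminorm $[w_1-w_2]_{W_p^\mu}$, which sits inside the first term of the claim. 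For $T_2$, I would apply~\eqref{key} directly to $r=\phi_1$, $s=\phi_2$ and use $|r-s|\le|(w_1-w_2)(x)|+|(w_1-w_2)(y)|\le 2\|w_1-w_2\|_\infty$ together with $|r|,|s|\le\|w_1\|_\infty+\|w_2\|_\infty$ to obtain
\[
|T_2(x,y)|\le C\big(1+\|w_1\|_\infty^{q-1}+\|w_2\|_\infty^{q-1}\big)\,\|w_1-w_2\|_\infty\,|w_2(x)-w_2(y)|\,,
\]
which upon integration produces the $W_p^\mu$-seminorm of $w_2$; bounding this by $\|w_1\|_{W_p^\mu}+\|w_2\|_{W_p^\mu}$ delivers the second term of the claim.

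The first assertion $g(w)\in W_p^\mu(\Omega)$ then follows by specializing to $w_1=w$, $w_2=0$ (the additive constant $g(0)$ lying in $L_p(\Omega)$ for bounded $\Omega$, as is the case in the intended applications; otherwise the seminorm estimate alone still follows). The main technical point is the particular telescoping $T_1+T_2$: a direct Lipschitz estimate on $g$ alone would conflate the two desired quantities, and it is precisely this decomposition that routes the singular weight $|x-y|^{-n-\mu p}$ onto the correct factor, namely onto $w_1-w_2$ in $T_1$ and onto $w_2$ in $T_2$, so as to match exactly the two-term structure of the claim.
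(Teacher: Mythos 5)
Your proof is correct and follows essentially the same route as the paper: a fundamental-theorem-of-calculus telescoping of the second difference into one term carrying $(w_1-w_2)(x)-(w_1-w_2)(y)$ (controlled by the growth bound $|\nabla g(r)|\le C(1+|r|^q)$) and one term carrying $\nabla g(\phi_1)-\nabla g(\phi_2)$ (controlled by \eqref{key}), with the only cosmetic difference that you interpolate between the values at $x$ and $y$ for fixed $w_i$ while the paper interpolates between $w_1$ and $w_2$ at a fixed spatial point. Your explicit remark about the constant $g(0)$ on unbounded $\Omega$ is a fair point of care that the paper glosses over.
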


\begin{proof}
Note that \eqref{key} implies
\begin{equation}\label{key2x}
\vert \nabla g(r)\vert \le c_2\big(1+\vert r\vert^{q}\big) \,,\quad r\in\R^d\,.
\end{equation}
Let $w_1,\, w_2\in W_{p}^{\mu}(\Omega,\R^d)\cap L_\infty (\Omega,\R^d)$. 
It then follows from the fundamental theorem of calculus,~\eqref{key}, and \eqref{key2x} that, for~$x,\, y\in \Omega$,
    \begin{equation*}
    \begin{split}
    &\big\vert \big(g(w_1(x))-g(w_2(x))\big)-
    \big(g(w_1(y))-g(w_2(y))\big)\big\vert\\
    &\le \left\vert \int_0^1 \nabla g\big(w_1(x)+\tau [w_1(x)-w_2(x)]\big)\, \rd \tau\,
    \cdot\big(w_1(x)-w_2(x)-w_1(y)+w_2(y)\big)\right\vert\\
     &\ + \left\vert \int_0^1 \left\{\nabla g\big(w_1(x)+\tau
    [w_1(x)-w_2(x)]\big)-\nabla g\big(w_1(y)+\tau
    [w_1(y)-w_2(y)]\big)\right\}\, \rd \tau \cdot\left( w_1(y)-w_2(y)\right)\right\vert\\
    &\le c_3 \big(1+\|w_1\|_\infty^{q} +\|w_2\|_\infty^{q}\big) \,\big\vert w_1(x)-w_2(x)-w_1(y)+w_2(y)\big\vert \\
&\quad +c_3 \big( 1+\|w_1\|_\infty^{q-1}+\|w_2\|_\infty^{q-1} \big)\| w_1-w_2\|_\infty \,  \big( \vert w_1(x)-w_1(y) \vert +\big\vert w_2(x)-w_2(y)\big\vert \big)
\,.
  \end{split}
    \end{equation*}
The latter estimate together with \eqref{key2x} leads to
    \begin{equation*}
    \begin{split}
    \|g(&w_1)-g(w_2)\|_{W_{p}^\mu}^p\\
 &\le\, \| g(w_1)-g(w_2)\|_{L_p}^p
  + \int_{\Omega\times\Omega} \dfrac{\big\vert \big(g(w_1(x))-g(w_2(x))\big)-
    \big(g(w_1(y))-g(w_2(y))\big)\big\vert^p}{\vert
    x-y\vert^{n+\mu p}}\, \rd (x,y)\\
     &\le   c_4 \big(1+\| w_1\|_{\infty}^{q}+\| w_2\|_{\infty}^{q}\big)^p \| w_1-w_2\|_{W_p^\mu}^p\\
&\quad +c_4 \big( 1+\|w_1\|_\infty^{q-1}+\|w_2\|_\infty^{q-1} \big)^p\,  \big( \| w_1\|_{W_p^\mu} +\| w_2\|_{W_p^\mu}\big)^p\,\| w_1-w_2\|_\infty^p   \,,
    \end{split}
    \end{equation*}
as claimed.
\end{proof}

\subsection{Functional Analytic Setting for Applications}\label{Sec:42}

We provide the underlying function analytic setting for  the applications in the next section. 
In order to include Dirichlet and Neumann boundary conditions on an open, bounded, smooth subset $\Omega$ of $\R^n$  with $n\in\N^*$, we fix  $\delta\in\{0,1\}$ and define
$$
\mathcal{B}u:=u  \ \text{ on } \ \partial\Omega \ \text{ if } \ \delta=0\,,\qquad \mathcal{B}u:=\partial_\nu u\ \text{ on } \ \partial\Omega \ \text{ if } \ \delta=1\,,
$$
that is, $\delta=0$ refers to Dirichlet boundary conditions and $\delta=1$ refers to Neumann boundary conditions. For a  fixed $p\in (1,\infty)$ we introduce
$$
F_0:=L_p(\Omega)\,,\qquad F_1:= W_{p,\mathcal{B}}^{2}(\Omega)=H_{p,\mathcal{B}}^{2}(\Omega)=\{v\in H_{p}^{2}(\Omega)\,:\,  \mathcal{B} v=0 \text{ on } \partial\Omega\}\,, 
$$
and
$$
B_0:=\Delta_\mathcal{B}\in \mathcal{H}\big(W_{p,\mathcal{B}}^{2}(\Omega),L_p(\Omega)\big)\,.
$$
Let
$$
\big\{(F_\theta,B_\theta)\,:\, -1\le \theta<\infty\big\}\ 
$$
be the interpolation-extrapolation scale generated by $(F_0,B_0)$ and the 
complex interpolation functor~$[\cdot,\cdot]_\theta$ (see \cite[\S 6]{Amann_Teubner} and \cite[\S V.1]{LQPP}). That is,
\begin{equation}\label{f2x}
B_\theta\in \mathcal{H}(F_{1+\theta},F_\theta)\,,\quad -1\le \theta<\infty\,,
\end{equation}
  and, for $2\theta\neq -1-\delta+1/p$, we have (see \cite[Theorem~7.1; Equation (7.5)]{Amann_Teubner}\footnote{In fact, this is stated in \cite{Amann_Teubner} for
   $-2+\frac{1}{p}+\delta< 2\theta\le 2$. 
 Invoking then fact that $(1-\Delta_\mathcal{B})^{-1}\in \mathcal{L}(H_{p}^{2\theta-2}(\Omega),H_{p}^{2\theta}(\Omega))$ for $2<2\theta<2+\delta+1/p$, see \cite[Theorem 5.5.1]{Tr78}, we obtain the full range in \eqref{f2}.})
\begin{equation}\label{f2}
 F_\theta\doteq H_{p,\mathcal{B}}^{2\theta}(\Omega):=\left\{\begin{array}{ll} \{v\in H_{p}^{2\theta}(\Omega) \,:\, \mathcal{B} v=0 \text{ on } 
 \partial\Omega\}\,, &\delta+\frac{1}{p}<2\theta<2+\delta+\frac{1}{p} \,,\\[3pt]
	 H_{p}^{2\theta}(\Omega)\,, & -2+\frac{1}{p}+\delta< 2\theta<\delta +\frac{1}{p}\,.\end{array} \right.
\end{equation}
Also note from \cite[Remarks~6.1~(d)]{Amann_Teubner} (since $\Delta_\mathcal{B}$ has bounded imaginary powers) the reiteration property 
\begin{equation}\label{f3}
 [F_\alpha,F_\beta]_\theta\doteq F_{(1-\theta)\alpha+\theta\beta}
\end{equation}
and from \cite[Equations~(5.2)-(5.6)]{Amann_Teubner} the embeddings
 \begin{equation}\label{f4}
 H_{p}^{s}(\Omega)\hookrightarrow  W_{p}^{\tau}(\Omega)\hookrightarrow  H_{p}^{t}(\Omega)\,,\quad t<\tau<s\,.
\end{equation}
In the following, we use the letter $D$ and $N$ to indicate Dirichlet respectively Neumann boundary conditions (instead of $\mathcal{B}$).

\begin{rem}
 The reason for working in the Bessel potential scale $H_{p,\mathcal{B}}^{s}(\Omega)$ is that it is stable under complex interpolation, see~\eqref{f2}-\eqref{f3}. 
 However, using the \emph{almost reiteration property}  \cite[I.Remarks~2.11.2]{LQPP} (instead of~\eqref{f3}) and~\eqref{f4}, one may work just as well in the Sobolev scale~$W_{p,\mathcal{B}}^{s}(\Omega)$.
\end{rem}

Finally, we recall a useful tool on pointwise multiplication:

\begin{prop}\label{PM}
Let $\Omega$ be an open, bounded, smooth subset of $\R^{n}$. Let $m\ge 2$ be an integer and let~${p,p_{j}\in [1,\infty)}$
 and $s,s_{j}\in (0,\infty )$ for $1\le j\le m$ be real numbers satisfying   $s < \min \{s_{j}\}$  along with $1/p\leq\sum_{j=1}^m1/p_j$ and
$$
s-\frac{n}{p} < \left\{ 
\begin{array}{ll}
\displaystyle{\sum\limits_{s_{j}<\frac{n}{p_{j}}}(s_{j}-\frac{n}{p_{j}})} &
\text{if} \quad \displaystyle{\min_{1\le j\le m}\left\{ s_{j}-\frac{n}{p_{j}} \right\} < 0} \ , \\
\displaystyle{\min_{1\le j \le m}\left\{ s_{j}-\frac{n}{p_{j}} \right\}} & \text{otherwise}\ .
\end{array}\right.
$$
Then pointwise multiplication
\begin{equation*}
\prod^{m}_{j=1}H_{p_{j}}^{s_{j}}(\Omega) \longrightarrow
H_{p}^{s}(\Omega )
\end{equation*}
is continuous.
\end{prop}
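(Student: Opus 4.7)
The plan is to reduce to $\Omega = \R^n$ via bounded extension and restriction operators for Bessel potential spaces on smooth bounded domains, then proceed by induction on $m$ with base case $m = 2$. For the bilinear case, the natural tool is the Littlewood--Paley paraproduct decomposition $f_1 f_2 = \Pi_{\mathrm{lh}}(f_1, f_2) + \Pi_{\mathrm{hl}}(f_1, f_2) + R(f_1, f_2)$, where the three pieces capture low-high, high-low, and diagonal frequency interactions. Each paraproduct is controlled by combining dyadic Bernstein inequalities with H\"older estimates on $L_p$-norms of dyadic blocks, and the Sobolev embeddings $H^{s_j}_{p_j} \hookrightarrow L_{\tilde p_j}$ --- available whenever $s_j - n/p_j \ge -n/\tilde p_j$ --- determine the range of admissible parameters.

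For the induction step with $m \ge 3$, I would group the first two factors, obtaining $f_1 f_2 \in H^{s'}_{p'}$ for an intermediate space chosen so that the bilinear estimate $H^{s_1}_{p_1} \times H^{s_2}_{p_2} \to H^{s'}_{p'}$ is admissible and the reduced $(m-1)$-tuple $(s', s_3, \ldots, s_m;\, p', p_3, \ldots, p_m)$ still satisfies the hypotheses of the proposition. A convenient choice is $s' = \min\{s_1, s_2\} - \varepsilon$ for small $\varepsilon > 0$, with $1/p'$ fixed so that the Sobolev index $s' - n/p'$ matches $(s_1 - n/p_1) + (s_2 - n/p_2)$ when both these exponents are negative, and $\min\{s_1 - n/p_1,\, s_2 - n/p_2\}$ otherwise. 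Iterating this reduction $m-1$ times brings the product down to a single Bessel potential space embedded in $H^{s}_{p}(\R^n)$.

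The main obstacle is bookkeeping: the hypothesis on $s - n/p$ splits into two regimes depending on whether some $s_j - n/p_j$ is negative, and these regimes must be propagated correctly through both the bilinear paraproduct estimate and the induction. In the subcritical regime (some Sobolev indices negative) the deficits from distinct negative factors must sum, which corresponds to the diagonal term $R(f_1, f_2)$ carrying the combined loss; in the supercritical regime every factor embeds into $L_\infty$ and a standard algebra argument applies. Verifying that the choice of intermediate $(s', p')$ preserves the correct regime --- or transitions safely between them without violating the strict inequality $s < \min\{s_j\}$ after finitely many iterations --- is the key technical point, while the remaining H\"older and Bernstein estimates are routine.
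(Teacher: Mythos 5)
The paper does not prove this proposition from scratch at all: it is obtained in one line by citing Amann's general multiplication theorem for Besov/Sobolev--Slobodeckii spaces \cite[Theorem~4.1]{AmannMult} and then transferring the result to the Bessel potential scale via the embeddings $H_p^s(\Omega)\hookrightarrow W_p^\tau(\Omega)\hookrightarrow H_p^t(\Omega)$ for $t<\tau<s$ (the strict inequalities in the hypotheses absorb the resulting $\varepsilon$-losses, and $W^s_p=B^s_{p,p}$ for non-integer $s$). Your plan --- extension to $\R^n$, a bilinear paraproduct estimate, and induction on $m$ by pairing factors --- is therefore a genuinely different and much more laborious route. It is the standard way such theorems are proved in the literature, and the bilinear step via low-high/high-low/resonant pieces is sound in principle (the strict inequalities keep you away from the known endpoint failures of the resonant term).

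However, the induction step as you specify it has a concrete flaw. You fix $s'=\min\{s_1,s_2\}-\varepsilon$ and then \emph{define} $1/p'$ by forcing $s'-n/p'$ to equal $(s_1-n/p_1)+(s_2-n/p_2)$ in the subcritical regime. Nothing guarantees $p'\ge 1$: take $n=1$, $s_1=s_2=0.9$, $p_1=p_2=1$; then the summed deficit is $-0.2$ and your prescription gives $1/p'=1.1-\varepsilon>1$, so the intermediate space leaves the Banach scale $H^{s'}_{p'}$ with $p'\in[1,\infty)$ on which both your bilinear lemma and the inductive hypothesis are formulated. The fix is to choose $s'$ strictly below $\min\{s_1,s_2\}$ but still above $s$ and below $n+D_{12}$ (where $D_{12}$ is the summed deficit); the hypothesis $s-n/p<D_{12}$ together with $p\ge1$ shows such an $s'$ exists, but this is precisely the bookkeeping you identify as ``the key technical point'' and then do not carry out. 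A second, related issue: the side condition $1/p\le\sum_j 1/p_j$ is \emph{not} preserved by your reduction, since $1/p'<1/p_1+1/p_2$ (because $s'<s_1+s_2$), so the reduced tuple can violate a stated hypothesis of the proposition you are inducting on; on a bounded domain this condition can be traded against the Sobolev index using $H^{s}_{q}(\Omega)\hookrightarrow H^{s}_{p}(\Omega)$ for $q\ge p$, but that trade-off interacts with the deficit count and must be checked. As written, the induction does not close; either repair these two parameter issues explicitly, or do what the paper does and invoke the existing multilinear result directly.
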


\begin{proof}
 Proposition~\ref{PM} is a consequence of the more general result stated in \cite[Theorem~4.1]{AmannMult} (see also Remarks~4.2~(d) therein) and the embeddings~\eqref{f4} 
 (noticing  that the Sobolev spaces $W_p^s(\Omega)$ coincide with the Besov spaces $B_{p,p}^s(\Omega)$ for $s\in (0,\infty)\setminus\N$).
\end{proof}

\section{Applications of Theorem~\ref{XT:1} to Chemotaxis Systems}\label{Sec4}

We illustrate the findings of our abstract  result  from Theorem~\ref{XT:1} for the semilinear case  in the context of two chemotaxis systems, see \eqref{PP0} and \eqref{PP0xxx}.  
Exploiting the fact that we may choose~${\xi>\alpha}$ in Theorem~\ref{XT:1}, 
we prove local well-posedness for these chemotaxis systems in spaces of low regularity and obtain in this way quite general  global existence criteria.\\
 
In the following, let  $\Omega$  be an open, bounded,  smooth subset of $\R^n$  with $n\in\N^*$.

\subsection{Parabolic-Parabolic Equations}\label{Sec:51}

To begin with, we consider the cross-diffusion system
\begin{subequations}\label{PP0}
\begin{align}
\partial_t u&=\mathrm{div}\big(\nabla u-u\nabla v) +g(x,u,v)\,,&& t>0\,,\quad x\in \Omega\label{oi0}\,,\\
\partial_t v&=\Delta v +u-v\,,&& t>0\,,\quad x\in \Omega\,,\label{oi1}
\end{align}
where the nonlinearity $g$ is assumed to be of polynomial form
\begin{align}\label{polyn}
g(x,u,v)=\sum_{\ell=1}^M c_\ell(x) u^{m_\ell} v^{k_\ell}
\end{align}
with $m_\ell, k_\ell, M\in \N$ and sufficiently smooth functions $c_\ell$ (in fact, $c_\ell\in H_p^r(\Omega)$ with $r>n/p$). The evolution equations are
 subject to the initial conditions
\begin{align}\label{initcon}
u(0,x)=u^0(x)\,, \quad v(0,x)=v^0(x)\,,\qquad  x\in \Omega\,,
\end{align}
and the boundary conditions
\begin{align}\label{boundcon}
 \partial_\nu u=   \partial_\nu v&= 0 \quad  \text{on } \partial\Omega\,.
\end{align}
\end{subequations}

Even though \eqref{PP0} has a natural quasilinear structure, we may derive its well-posedness from Theorem~\ref{XT:1} by formulating \eqref{PP0} as a semilinear
evolution problem with different regularity and integrability assumptions for the variables $u$ and $v$. 
The advantage of this approach compared to the quasilinear theory is discussed in Remark~\ref{Rems:1} below.  
We also refer to \cite[Theorem~1]{Biler_AMSA_98} where  the existence of local weak solutions to \eqref{PP0} for initial data  $u_0\in L_p(\Omega)$ and $v_0\in H^1_p(\Omega)$ with $p>n$  
is established in the particular case $g=0$  and $n=2$.
In fact,  it is pointed out in \cite[Remarks]{Biler_AMSA_98} that one may weaken the restriction on $u_0$ and only assume that~$u_0\in L_{p/2}(\Omega)$.
As shown in the subsequent Theorem~\ref{T2'}, this is possible even in our setting of strong solutions and a function  $g$ 
 containing at most quadratic nonlinearities in~$u$, that is~$\max\{\, m_1,\ldots,\, m_\ell\}\leq 2$.
 In particular, in the physically relevant dimensions $n\in\{1,\, 2,\, 3\}$, we prove that already a priori estimates for $u$ in~$L_2(\Omega)$ would guarantee that the strong solution is globally defined.
In the case when~${ \max\{\, m_1,\ldots,\, m_\ell\}\geq  3}$, the  nonlinearity $g$ enforces more restrictive conditions on~$p$,  see Remark~\ref{Rems:1}~(a).

\begin{thm}\label{T2'}
Let $\max\{ \, m_1,\ldots,\, m_\ell\}\leq 2$,  $p\in (1,\infty) $ satisfy $p>n/2$, and choose $q\in (n,\infty)$  such that  
\[
  \frac{n}{p}-\frac{n}{q}<1\,.
\]
 Then   \eqref{PP0} generates a semiflow on $ L_p(\Omega)\times H_{q}^{1}(\Omega)$. 
Moreover, if the maximal strong solution 
$$
(u,v)\in C \big([0,t^+),L_p(\Omega)\times H_{q}^{1}(\Omega)\big)
$$   
to \eqref{PP0}, associated with an initial value $(u^0,v^0)\in L_p(\Omega)\times  H_{q}^{1}(\Omega)$, is not globally defined, that is, if~$t^+<\infty$, then
\[
\limsup_{t\nearrow t^+}\|u(t)\|_{L_p}=\infty\,.
\]
\end{thm}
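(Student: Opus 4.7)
The plan is to recast \eqref{PP0} as an abstract semilinear Cauchy problem
$$
U'=\mathcal{A}U+\mathcal{F}(U),\qquad U(0)=(u^0,v^0),
$$
with $U=(u,v)$, linear part $\mathcal{A}=\mathrm{diag}(\Delta_N,\Delta_N-1)$ and nonlinearity
$$
\mathcal{F}(u,v)=\bigl(-\mathrm{div}(u\nabla v)+g(\cdot,u,v),\,u\bigr),
$$
and then to apply Theorem~\ref{XT:1}. The underlying scale is taken as the product of two Bessel-potential extrapolation scales from Section~\ref{Sec:42}, shifted so that the phase space $E_\alpha=L_p(\Omega)\times H_q^1(\Omega)$ sits at a strictly positive level $\alpha\in(n/(2p),1)$ to be fine-tuned below:
$$
E_\theta\doteq H_{p,N}^{2(\theta-\alpha)}(\Omega)\times H_{q,N}^{2(\theta-\alpha)+1}(\Omega).
$$
By the extrapolation construction of Section~\ref{Sec:42} and~\eqref{f2x}, the diagonal operator $\mathcal{A}$ belongs to $\mathcal{H}(E_1,E_0)$.

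The central analytic task is to verify the Lipschitz estimate~\eqref{Xa1d} for $\mathcal{F}$ with the polynomial-growth exponent in~\eqref{Xa1d} equal to $2$; this matches the quadratic reaction $\max_\ell m_\ell\le 2$ and the bilinearity of the drift. For the reaction $g$, Proposition~\ref{PM} together with~\eqref{f4} and the embedding $H_q^1\hookrightarrow L_\infty$ (valid since $q>n$) shows that the dominant term $u^2 v^{k_\ell}$ lies in $L_{p/2}$, which in turn embeds into $H_p^{2(\gamma-\alpha)}$ provided $2(\alpha-\gamma)\ge n/p$; the feasibility of this with $\gamma\ge 0$ is exactly where the hypothesis $p>n/2$ enters. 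For the cross-diffusion, one has $u\in H_p^{2(\xi-\alpha)}$ and $\nabla v\in H_q^{2(\xi-\alpha)}$ at level $\xi$; Proposition~\ref{PM} places $u\nabla v$ in a suitable Bessel-potential space (here $n/p-n/q<1$ is used), and the subsequent divergence lowers the regularity by one unit, giving the desired bilinear map into $E_\gamma^{(1)}$. The trivial second component $\mathcal{F}_2=u$ is handled by a Sobolev embedding whose admissibility is again governed by $n/p-n/q<1$.

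Collecting the contributions, the interpolation constraint $2(\xi-\alpha)<\min\{1,1+\gamma-\alpha\}$ of~\eqref{16b} is then realised by choosing $\xi-\alpha$ and $\alpha-\gamma$ small enough, the available room being guaranteed by $p>n/2$. Theorem~\ref{XT:1} therefore delivers, for every $(u^0,v^0)\in L_p(\Omega)\times H_q^1(\Omega)$, a unique maximal strong solution, the semiflow property on $L_p(\Omega)\times H_q^1(\Omega)$, and the preliminary blow-up criterion
$$
\limsup_{t\nearrow t^+}\bigl(\|u(t)\|_{L_p}+\|v(t)\|_{H_q^1}\bigr)=\infty\qquad\text{whenever }t^+<\infty.
$$

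The final step sharpens this to blow-up of $\|u\|_{L_p}$ alone. Assuming $u$ stays bounded in $L^\infty([0,t^+),L_p)$, the equation $v'=(\Delta_N-1)v+u$ is a \emph{linear} parabolic Cauchy problem, and variation of constants with the analytic Neumann heat semigroup yields
$$
\|v(t)\|_{H_q^1}\le C\|v^0\|_{H_q^1}+C\int_0^t(t-s)^{-\eta}\|u(s)\|_{L_p}\,\rd s
$$
for some $\eta\in(0,1)$; the exponent $\eta$ arises from the composite smoothing bound $\|e^{(t-s)(\Delta_N-1)}\|_{\mathcal{L}(L_p,H_q^1)}\le C(t-s)^{-\eta}$, which is available precisely because $n/p-n/q<1$ (combine $e^{t\Delta_N}\in\mathcal{L}(L_p,H_p^{2\theta})$ with the Sobolev embedding $H_p^{2\theta}\hookrightarrow H_q^1$ for $\theta$ slightly above $(1+n/p-n/q)/2<1$). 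Hence $\|v(t)\|_{H_q^1}$ stays bounded on $[0,t^+)$, so the preliminary criterion collapses to $\limsup_{t\nearrow t^+}\|u(t)\|_{L_p}=\infty$. The main obstacle I anticipate is the bookkeeping in the Lipschitz step: one must orchestrate $\alpha,\beta,\gamma,\xi$ across both components so that every pointwise-multiplication, divergence and embedding estimate fits inside the admissible ranges of Proposition~\ref{PM} and~\eqref{f2}--\eqref{f4}, while still leaving slack in the critical inequality $2(\xi-\alpha)<1+\gamma-\alpha$; the hypotheses $p>n/2$ and $n/p-n/q<1$ turn out to be essentially tight for this to be possible.
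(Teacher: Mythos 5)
Your overall strategy coincides with the paper's: recast \eqref{PP0} as a semilinear problem on a product of two Bessel--potential extrapolation scales with different integrabilities, place the phase space at $E_\alpha=L_p\times H_q^1$, verify \eqref{Xa1d} with exponent $2$ via Proposition~\ref{PM}, invoke Theorem~\ref{XT:1}, and then remove $v$ from the blow-up criterion by a linear parabolic estimate for \eqref{oi1}. The last step is essentially identical to the paper's and is fine. However, two of your concrete verifications do not go through on the full parameter range of the theorem.

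First, you put the coupling term of \eqref{oi1} into the nonlinearity, $\mathcal{F}_2=u$, with $\mathcal{A}$ diagonal. Then \eqref{Xa1d} forces the embedding $H_p^{2(\xi-\alpha)}(\Omega)\hookrightarrow H_q^{2(\gamma-\alpha)+1}(\Omega)$, i.e.\ $2(\xi-\gamma)\ge 1+\frac{n}{p}-\frac{n}{q}$. But \eqref{16b} with exponent $2$ gives $2(\xi-\alpha)<1$, and the identification \eqref{f2} of the Neumann extrapolation scale caps the base space at order $-1+1/p$, so $\alpha<\tfrac12(1-\tfrac1p)$ and $\gamma\ge 0$; together these yield $2(\xi-\gamma)<1+2\alpha<2-\tfrac1p$, which is incompatible with $2(\xi-\gamma)\ge 1+\frac np-\frac nq$ whenever $\frac np-\frac nq\ge 1-\frac1p$ (e.g.\ $n=3$, $p=2$, $q=4$, an admissible choice). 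The paper avoids this entirely by making the linear part lower triangular, $A=\bigl(\begin{smallmatrix}\Delta&0\\1&\Delta-1\end{smallmatrix}\bigr)$, for which only $H_{p,N}^{2-2\ve}(\Omega)\hookrightarrow H_{q,N}^{1-2\ve}(\Omega)$ (i.e.\ $\frac np-\frac nq\le 1$, at the level of $E_1\to E_0$) is needed; you should adopt this triangular splitting.

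Second, for the reaction term you estimate $u^2v^{k_\ell}\in L_{p/2}$ using only $u\in L_p$ and then require $L_{p/2}\hookrightarrow H_p^{2(\gamma-\alpha)}$, i.e.\ $2(\alpha-\gamma)\ge n/p$. With the same cap $\alpha<\tfrac12(1-\tfrac1p)$ this forces $p>n$, not $p>n/2$; your claim that $\alpha$ may range up to $1$ ignores that $E_0$ would then leave the range where \eqref{f2} identifies the extrapolation space with $H_{p,N}^{-2\alpha}(\Omega)$ and where Proposition~\ref{PM} is applicable. The correct device (used in the paper) is to measure $u$ in the first component of $E_\xi$, i.e.\ in $H_p^{1-\ve}(\Omega)$, so that the multiplication $H_{p,N}^{1-\ve}\bullet H_{p,N}^{1-\ve}\to L_p$ of Proposition~\ref{PM} applies precisely under $p>n/2$; the resulting quadratic growth in $\|\cdot\|_{E_\xi}$ is exactly what the factor $\|w\|_\xi^{q-1}$ in \eqref{Xa1d} is designed to absorb. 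With these two repairs your argument aligns with the paper's proof.
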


 Before providing the proof of Theorem~\ref{T2'} we note:

\begin{rems}\label{Rems:1}
 {\bf (a)} Theorem~\ref{T2'} remains true if $m:=\max\{ m_1,\ldots,\, m_\ell\}\geq 3$ for the particular choice~$p=q>(m-1)n$. The proof of this result is similar to that of Theorem~\ref{T2'} and therefore omitted.\vspace{2mm}

{\bf (b)}   In order to allow for quite general initial data, we  shall consider equation \eqref{oi0} in a Bessel potential space of negative order. 
  Nevertheless,  a subsequent bootstrapping argument shows that~${(u,v)\in C^\infty((0,t^+)\times\bar\Omega, \R^2)}$  provided  that $c_\ell\in C^\infty(\bar\Omega)$   for  $1\leq \ell\leq M$.\vspace{2mm}

{\bf (c)}  We emphasize that   it is not at all clear whether the choice $(u^0,v^0)\in L_p(\Omega) \times H^{1}_q(\Omega)$ for the 
initial data in Theorem~\ref{T2'}  is possible when  using the quasilinear parabolic theory in \cite{Amann_Teubner} instead of Theorem~\ref{XT:1}, even if $g=0$ and $p=q>n$.
 Indeed, considering the evolution problem \eqref{PP0}  in the  ambient space~\mbox{$H_{p,N}^{2\sigma}(\Omega)\times H_{p,N}^{2\tau}(\Omega)$},
  the term $\mathrm{div}\big(\nabla u-u\nabla v)$ in \eqref{oi0} can be handled in several ways, 
  either quasilinear
\begin{align*}
&A_1(w)z:=\Delta  z_1- \mathrm{div}\big(z_1\nabla v)\,,\qquad A_2(w)z:=\Delta  z_1- \mathrm{div}\big(u\nabla z_2)\,,
\end{align*}
 for  $w=(u,v)$ and $z=(z_1,z_2)\in \mathrm{dom}(A_i(w)),$ or semilinear
\begin{align*}
& A_3z:=\Delta  z_1\ \text{ with }\ f(w):=\mathrm{div}\big(u\nabla v)
\end{align*}
for $z=(z_1,z_2)\in \mathrm{dom}(A_3)$. 
One then minimally requires that  $2\sigma,\, 2\tau >-1+1/p$ (to identify the extrapolation scale). 
In order to achieve  $A_1(w)z\in H_{p,N}^{2\sigma}(\Omega)$, one needs $z_1\nabla v\in H_{p}^{2\sigma+1}(\Omega)$, hence~${v\in H_{p}^{2\sigma+2}(\Omega)}$ with~${2\sigma+2>1,}$ 
so that a general $v^0\in H_p^1(\Omega)$ is not possible when using \cite{Amann_Teubner}.
Similarly,~$A_2(w)z\in H_{p,N}^{2\sigma}(\Omega)$ requires~${u\nabla z_2\in H_{p}^{2\sigma+1}(\Omega)}$ and hence~$u\in H_{p}^{2\sigma+1}(\Omega)$ with $2\sigma+1>0$ so that $u^0\in L_p(\Omega)$ seems impossible.
 Finally, for  $f(w)\in H_{p,N}^{2\sigma}(\Omega)$  one needs $u\nabla v\in H_{p}^{2\sigma+1}(\Omega)$ with~${2\sigma+1>0}$, and  hence neither~${u^0\in L_p(\Omega)}$ 
 nor $v^0\in H_p^1(\Omega)$   seems possible.
\end{rems}

We now establish the proof of Theorem~\ref{T2'}.

\begin{proof}[Proof of Theorem~\ref{T2'}]
Let  $\ve$ be such that 
 \[
 0<2\ve< \min\Big\{1-\frac{1}{p},\,  1-\frac{n}{q},\, 1 -\frac{n}{p}+\frac{n}{q}\Big\}
 \]
and define
\begin{align*}
&E_0:= H_{p,N}^{-2\ve}(\Omega)\times H_{q,N}^{1-2\ve}(\Omega)\,, &E_1:= H_{p,N}^{2-2\ve}(\Omega)\times H_{q,N}^{3-2\ve}(\Omega) \,,
\end{align*}
so that
$$
E_\theta=H_{p,N}^{-2\ve+2\theta}(\Omega)\times H_{q,N}^{1-2\ve+2\theta}(\Omega)\,,\quad 2\theta\in [0,1]\setminus\{ 1+1/p+2\ve\,,\, 1/q+2\ve\}\,.
$$
Set
\[
0<\gamma:=\frac{\ve}{3}<\alpha:=\ve<\xi=\frac{1+\ve}{2}<1\,.
\]
It follows that
\begin{equation}\label{f5c}
2(\xi-\alpha)<  \min  \,\{1,\,1+\gamma-\alpha\}
\end{equation}
and, recalling ~\eqref{f2}-\eqref{f3}, we have
\[
E_\xi= H_{p,N}^{1- \ve}(\Omega)\times H_{q,N}^{2- \ve}(\Omega)\hookrightarrow E_\alpha= L_p(\Omega)\times H_{q}^{1}(\Omega)\hookrightarrow E_\gamma= H_{p,N}^{- 4\ve/3}(\Omega)\times H_{q,N}^{1- 4\ve/3}(\Omega)\,.
\]
Since  $H_{p,N}^{2-2\ve}(\Omega)\hookrightarrow H_{q,N}^{1-2\ve}(\Omega),$ we obtain from  \cite[I.~Theorem 1.6.1]{LQPP}  and \eqref{f2x}-\eqref{f2}   that 
$$
A:=\begin{pmatrix}
\Delta&0\\[1ex]
1&\Delta-1
\end{pmatrix}\in \mathcal{H}(E_1,E_0)\,.
$$
Defining $f:=(f_1+f_2,0)$ with
\[
\text{$f_1(w):=\big( -\mathrm{div}\big(u\nabla v),0\big)$ \qquad\text{and}\qquad $f_2(w):=\sum_{\ell=1}^M c_\ell(x) u^{m_\ell} v^{k_\ell}$}
\] 
for $w:=(u,v)$,
we may  thus recast \eqref{PP0} as a semilinear parabolic Cauchy problem
$$
w'=A w+f(w)\,,\quad t>0\,,\qquad w(0)=w^0:=(u^0,v^0)\,.
$$
We next show that  $f:E_\xi\to E_\gamma$ is well-defined and  that, given $R>0$, there is a constant $c(R)>0$ such that
\begin{equation}\label{condf}
\|f(w)-f(\bar w)\|_{E_\gamma}\le  c(R)\big[1+\|w \|_{E_\xi}+\|\bar w\|_{E_\xi}\big]\big[\big(1+\|w \|_{E_\xi}+\|\bar w\|_{E_\xi}\big) \|w -\bar w\|_{{E_\alpha}}+\|w-\bar w\|_{E_\xi}\big]
\end{equation}
for all  $w,\, \bar w\in E_\xi$ with $\|w\|_{E_\alpha},\, \|\bar w\|_{E_\alpha}\leq R.$
To this end we estimate, for each $w\in E_\xi$ with~${\|w\|_{E_\alpha}\leq R,}$
\[
\|f_1(w)\|_{H^{-4\ve/3}_{p,N}} \leq c\|u\nabla v\|_{H^{1-4\ve/3}_{p,N}}\leq c\|u\|_{H^{1-\ve}_{p,N}}\|v\|_{H^{2-\ve}_{q,N}}\leq c\|w\|_{E_\xi}^2\,,
\]
where the continuity of the multiplication 
\[
H_{p,N}^{1- \ve}(\Omega)\bullet H_{q,N}^{1- \ve}(\Omega)\longrightarrow H_{p,N}^{1- 4\ve/3}(\Omega)
\]
is used in the second step (note that $1-\ve>n/q)$, see Proposition~\ref{PM}. 
Moreover, since $H_{q,N}^{2- \ve}(\Omega)$ is an algebra with respect to pointwise multiplication, $\max\{ \, m_1,\ldots,\, m_\ell\}\leq 2$,  and since the multiplication 
\[
H_{p,N}^{1- \ve}(\Omega)\bullet H_{p,N}^{1- \ve}(\Omega)\longrightarrow L_p(\Omega)
\]
is continuous according to  Proposition~\ref{PM}, we have (assuming that $c_\ell\in H_p^r(\Omega)$ with $r>n/p$)
\[
\|f_2(w)\|_{H^{-4\ve/3}_{p,N}} \leq c \|f_2(w)\|_{L_p} \leq c(R)(1+\|u\|_{L_p}+\|u\|_{L_p}^2)  \leq c(R)(1+\|w\|_{E_\xi}^2)\,.
\]
This proves that  $f:E_\xi\to E_\gamma$  is well-defined.
Arguing as above, it is straightforward to show now that the local Lipschitz continuity property~\eqref{condf} is satisfied. 
We are thus in position to apply Theorem~\ref{XT:1} to deduce that \eqref{PP0} generates a semiflow in $E_\alpha=L_p(\Omega)\times H_{q}^{1}(\Omega)$.  

Let $w=(u,v) \in    C ([0,t^+),E_\alpha)$
be a maximal solution to \eqref{PP0} with $t^+=t^+(w^0)<\infty$. We then have
\begin{equation}\label{esti}
\limsup_{t\nearrow t^+}\|w(t)\|_{E_{\alpha}}=\limsup_{t\nearrow t^+}\|(u(t),v(t))\|_{L_p \times H_{q}^{1}}=\infty\,.
\end{equation}
Assume that  $u$ is bounded in $L_p(\Omega).$ We may also assume $w^0\in E_1,$ hence~${v_0\in H^2_{q,N}(\Omega)\hookrightarrow H^{2-2\ve}_{p,N}(\Omega)}.$
   It then follows from~\eqref{oi1} that $v$ is bounded in $H^{2-2\ve}_{p,N}(\Omega),$ hence also in $H^{1}_{q,N}(\Omega) $ due to the embedding~${H^{2-2\ve}_{p,N}(\Omega)\hookrightarrow H^{1}_{q,N}(\Omega)}$. 
   Hence, \eqref{esti} is equivalent to
\begin{equation*}
\limsup_{t\nearrow t^+}\|u(t)\|_{L_p}=\infty\,.
\end{equation*}
  This proves  the claim.
\end{proof}

\subsection{A Degenerate Chemotaxis System}\label{Sec:52}
We consider  the quasilinear evolution problem
\begin{subequations}\label{PP0xxx}
\begin{align}
\partial_t u&=\mathrm{div}(\nabla u-u\nabla w)\,,&& t>0\,,\quad x\in \Omega\label{iw}\,,\\
\partial_t v&=u-v\,,&& t>0\,,\quad x\in \Omega\,,\label{iiw}\\
\partial_t w&=\Delta w+v-w\,,&& t>0\,,\quad x\in \Omega\,,\label{iiiw}
\end{align} 
subject to the initial conditions
\begin{align}\label{initconxxx}
u(0,x)=u^0(x)\,,\quad v(0,x)=v^0(x)\,,\quad w(0,x)=w^0(x)\,,\qquad  x\in \Omega\,,
\end{align}
and the boundary conditions
\begin{align}\label{boundconxxx}
  (\nabla u-u\nabla w)\cdot \nu&= \partial_\nu w=0 \quad  \text{on } \partial\Omega\,.
\end{align}
\end{subequations}
This particular chemotaxis system is investigated in~\cite{PhL19} (see also \cite{HAm91} for a general strategy to handle this type of problems).
Although equations~\eqref{iw} and~\eqref{iiiw} are coupled through highest order terms, we proceed as in the previous Section~\ref{Sec:51} and 
 consider different regularities  and integrability properties for the variables~ $u,\, v,$ and~$w$
to reformulate \eqref{PP0xxx} as a semilinear parabolic evolution problem. 
 An application of   Theorem~\ref{XT:1} enables us to  impose  rather low regularities on the initial data and to derive sharp global existence criteria.

 \begin{thm}\label{T2xx}
Let $q\in (\max\{1,\, n-1\},\infty)$ and $1<p\le q$ with   
\begin{equation}\label{rt}
  \frac{n}{p}-\frac{n}{q}<1\,.
\end{equation} 
Let $s, \tau\in\R$ satisfy 
\begin{align*}
s\in \left(\max\left\{  \,-1+\frac{1}{p}\,,\,-1+\frac{n}{q}\right\},\frac{1}{q}\right)\,\quad\text{and}\,\quad
 \tau\in\left( \max\left\{\frac{n}{p}\,,\,\frac{n+1}{p}-\frac{n}{q}\right\}\,,\, s+2\right)\setminus\left\{1+\frac{1}{p}\right\}\,.
\end{align*} 
 Then \eqref{PP0xxx}   generates a semiflow on $H_{p,N}^s(\Omega)\times H_{p,N}^\tau(\Omega)\times H_{q,N}^{s+1}(\Omega)$. 
Moreover, if the maximal strong solution 
\[
(u,v,w)\in C \big([0,t^+),{H_{p,N}^s(\Omega)\times H_{p,N}^\tau(\Omega)\times H_{q,N}^{s+1}(\Omega)}\big)
\] 
to~\eqref{PP0xxx},  associated with an initial value
 $(u^0,v^0, w^0)\in H_{p,N}^s(\Omega)\times H_{p,N}^\tau(\Omega)\times H_{q,N}^{s+1}(\Omega)$,   is not globally defined,  then
\begin{equation}\label{g5y5}
\limsup_{t\nearrow t^+}\|u(t)\|_{H_{p}^s}=\infty\,.
\end{equation}
\end{thm}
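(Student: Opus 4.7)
The plan is to reformulate \eqref{PP0xxx} as a semilinear abstract Cauchy problem on a product interpolation-extrapolation scale and invoke Theorem~\ref{XT:1}, mimicking the template of the proof of Theorem~\ref{T2'}. Writing $W:=(u,v,w)$, the key structural observation is that the equation $\partial_t v=u-v$ has no spatial derivatives, forcing the $v$-component of $E_0$ and $E_1$ to coincide (no parabolic smoothing of $v$), whereas the $u$- and $w$-equations are genuinely parabolic. The sole genuine nonlinearity is the cross-diffusion term $\mathrm{div}(u\nabla w)$, which we place into $f$; the zero-order linear couplings $u\mapsto u$ in the $v$-equation and $v\mapsto v$ in the $w$-equation enter the drift~$A$.

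Concretely, pick $\alpha\in(0,1)$ with $\alpha\ge\frac12(s+1-\tau+n/p-n/q)_+$ and $s-2\alpha>-1+1/p$; the hypothesis $\tau>(n+1)/p-n/q$ is exactly what renders this interval non-empty. Define
\[
E_0:=H_{p,N}^{s-2\alpha}(\Omega)\times H_{p,N}^{\tau}(\Omega)\times H_{q,N}^{s+1-2\alpha}(\Omega),\quad E_1:=H_{p,N}^{s+2-2\alpha}(\Omega)\times H_{p,N}^{\tau}(\Omega)\times H_{q,N}^{s+3-2\alpha}(\Omega),
\]
so that \eqref{f2}--\eqref{f3} yield $E_\alpha=H_{p,N}^{s}\times H_{p,N}^{\tau}\times H_{q,N}^{s+1}$, matching the phase space. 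Set
\[
A:=\begin{pmatrix}\Delta&0&0\\ 1&-1&0\\ 0&1&\Delta-1\end{pmatrix},\qquad f(W):=\bigl(-\mathrm{div}(u\nabla w),\,0,\,0\bigr),
\]
recasting \eqref{PP0xxx} as $W'=AW+f(W)$. The diagonal blocks generate analytic semigroups by \eqref{f2x}; the subdiagonal couplings amount to the embeddings $H_p^{s+2-2\alpha}\hookrightarrow H_p^{\tau}$ (from $\tau<s+2$) and $H_p^{\tau}\hookrightarrow H_q^{s+1-2\alpha}$ (from the choice of $\alpha$), both continuous, so \cite[I.~Theorem~1.6.1]{LQPP} yields $A\in\mathcal{H}(E_1,E_0)$.

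For the nonlinearity, pick $\gamma\in(0,\alpha)$, $\xi\in(\alpha,1)$, and take $q=2$ in \eqref{16b}; the Theorem~\ref{XT:1} compatibility $2(\xi-\alpha)<1+\gamma-\alpha$ together with the algebraic bound $\xi>\frac12+\gamma$ forced by the bilinear structure of $\mathrm{div}(u\nabla w)$ admits such a choice precisely because $\gamma<\alpha$. Writing $r:=s+2\xi-2\alpha$, any $W\in E_\xi$ satisfies $u\in H_p^r$ and $\nabla w\in H_q^r$, and Proposition~\ref{PM} (applied with $p_1=p$, $p_2=q$, both indices equal to $r$, target index $s+1-2\alpha+2\gamma$ in $H_p$) delivers continuity of the pointwise product; its crucial differential-dimension inequality, in the worst case where both factors fall below the embedding threshold, reduces to $s>-1+n/q$, one of the standing hypotheses (while $s>-1+1/p$ guarantees $r>0$). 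Applying $\mathrm{div}$ loses one derivative and places the image in $H_p^{s-2\alpha+2\gamma}=E_\gamma^{(u)}$. The bilinear structure then readily furnishes the Lipschitz bound \eqref{Xa1d}, and Theorem~\ref{XT:1} produces the maximal strong solution and the semiflow on $E_\alpha$.

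The blow-up criterion \eqref{g5y5} is established by assuming $t^+<\infty$ and $\|u(t)\|_{H_p^s}$ bounded on $[0,t^+)$, then bounding the whole orbit in $E_\alpha$ uniformly via a parabolic bootstrap and invoking Theorem~\ref{XT:1}(v). The Duhamel formulas
\[
v(t)=e^{-t}v^0+\int_0^t e^{-(t-\sigma)}u(\sigma)\,d\sigma,\qquad w(t)=e^{t(\Delta-1)}w^0+\int_0^t e^{(t-\sigma)(\Delta-1)}v(\sigma)\,d\sigma
\]
together with the parabolic smoothing of $e^{t(\Delta-1)}$ propagate regularity from $v$ to $w$; an iterative bootstrap using the Duhamel representation for $u$ combined with the bilinear estimate of Proposition~\ref{PM} then upgrades $u$ from $H_p^s$ to $H_p^{\tau}$, which in turn bounds $v$ in $H_p^{\tau}$ and $w$ in $H_q^{s+1}$. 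The principal obstacle throughout is the asymmetry between the non-smoothing $v$-equation and the parabolic $w$-equation: coupling them inside $A$ forces the embedding $H_p^{\tau}\hookrightarrow H_q^{s+1-2\alpha}$, and it is exactly this requirement that dictates the somewhat technical restrictions on $(s,\tau,p,q)$ in the statement.
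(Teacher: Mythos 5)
Your construction of the semilinear framework is essentially the paper's: the same lower-triangular operator $A$, the same scale with the non-smoothing $v$-component held fixed at $H_{p,N}^{\tau}$, the same single nonlinearity $f(W)=(-\mathrm{div}(u\nabla w),0,0)$, and the same appeal to Theorem~\ref{XT:1} with $q=2$. (The paper parametrizes by $a=s-2\alpha$ and takes $\gamma=0$, which is admissible since $\xi<1$; your choice $\gamma\in(0,\alpha)$ works equally well.) One small imprecision: for the product estimate you need the \emph{shifted} exponent $a=s-2\alpha$ to satisfy $a>-1+n/q$ (so that $a+1>n/q$ in Proposition~\ref{PM}), not merely $s>-1+n/q$; this adds the constraint $2\alpha<s+1-n/q$ to your list for $\alpha$, which is compatible with the others (using $\tau>n/p$) but must be imposed. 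Likewise $H_p^{s+2-2\alpha}\hookrightarrow H_p^{\tau}$ requires $2\alpha<s+2-\tau$, not just $\tau<s+2$; again satisfiable, again missing from your list.

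The blow-up argument is where you genuinely diverge, and where your sketch hides the real work. The paper does \emph{not} bootstrap $u$ through Duhamel with the bilinear estimate; it first bounds $v$ in $H_p^{s}$ and $w$ in $H_p^{s+2-2\rho}$ (hence $\nabla w$ in $H_p^{s+1-2\rho}$, H\"older in time), then freezes $w$ and regards \eqref{iw} as a \emph{linear} non-autonomous equation $u'=B(t)u$ with $B(t)y=\Delta_N y-\mathrm{div}(y\nabla w(t))$, obtaining a uniform bound for $u$ in $H_{p,N}^{a+2}$ from \cite[II.~Theorem~5.4.1]{LQPP}. The point of this device is that the multiplication $H_p^{\tau}\bullet H_p^{s+1-2\rho}\to H_p^{a+1}$ always has one factor above the differential-dimension threshold ($\tau>n/p$), so Proposition~\ref{PM} applies in one shot. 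Your direct Duhamel iteration starts from $u\in H_p^{s}$ with $s$ possibly small or negative and $\nabla w\in H_p^{s+1-2\rho}$, and when $n/p$ is close to $s+2$ (which the hypotheses permit, since only $n/p<\tau<s+2$ is guaranteed) both factors lie below the threshold and the per-step regularity gain is $s+2-n/p-2\rho-\delta$, which can be arbitrarily small; the iteration still converges but requires an unbounded number of carefully tracked steps and a correspondingly careful choice of $\rho$ and of the semigroup-smoothing exponents. So your route is plausible but materially harder than the one sentence you devote to it, and you should either carry out that iteration explicitly (verifying the gain is positive and that the Duhamel integrals converge at each stage) or adopt the paper's linear-equation trick, which is what makes the argument close cleanly.
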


Before establishing the proof of  Theorem~\ref{T2xx}, we  note:

\begin{rem}\label{Rems:22} 
 If $q>n$ and  $p\in (1,q]$ satisfy the condition~\eqref{rt}, then  one may choose~${s=0}$ (and an arbitrary~${\tau\in \big(\max\{n/p,(n+1)/p-n/q\}, 2\big)}$  such that $ \tau\neq 1+1/p$) 
 to obtain a semiflow on~${L_p(\Omega)\times H_{p}^\tau(\Omega)\times H_{q}^{1}(\Omega)}$. 
In this case, solutions are global provided 
\[
\text{$\sup_{t\in [0,t^+)\cap [0,T]}\|u(t)\|_{L_p}<\infty$\quad for each $T>0$\,.}
\]
 In particular, if $q=2p>n$, then one may choose $s=0$ and $\tau\in \big(\max\{n/p,(n+2)/(2p)\}, 2\big)$.   Hence,  a priori estimates for $u$ in~$L_2(\Omega)$
 ensure that the strong solution is globally defined in the physically relevant dimensions $n\in\{1,\, 2,\, 3\}$. For such estimates see \cite{PhL19}.
\end{rem}

We now establish  Theorem~\ref{T2xx}.

\begin{proof}[Proof of Theorem~\ref{T2xx}]
The assumptions on $s$ and $\tau$ along with~\eqref{rt} imply that we can choose a number $a$ such that
$$
\max\big\{\tau-2\,,\, -1+n/q\,,\,-1+1/p\big\}<a<\min\big\{\tau-1-n/p+n/q\,,\,s\big\}\,,
$$
that is, $a\in \big(\max\{-1+n/q,-1+1/p\},s\big)$ and
\begin{equation}\label{ww}
a+1+\frac{n}{p}-\frac{n}{q}<\tau<a+2\,.
\end{equation} 
 Set
\begin{equation}\label{g55}
 E_\theta:= H_{p,N}^{a+2\theta}(\Omega)\times H_{p,N}^{\tau}(\Omega) \times H_{q,N}^{a+1+2\theta}(\Omega)\,,\quad
 2\theta\in [0,2]\setminus\left\{ 1/q-a \,,\,  1+1/p-a \right\}\,.
\end{equation}
 We note that the middle component is independent of $\theta$ and that all spaces belong to the scale of~\eqref{f2}. We then have $E_\theta=[E_0,E_1]_\theta$. Set 
\begin{equation*}
2\alpha:=s-a\in (0,1)\setminus\left\{ 1/q-a \,,\,  1+1/p-a \right\}\,,\quad \gamma:=0\,,
\end{equation*}
and  choose 
\begin{equation*}
2\xi\in(1,1+\alpha)\setminus \left\{ 1/q-a,\, 1+1/p-a\right\}\,.
\end{equation*}
Note that $0=\gamma<\alpha<\xi<1$.
Since  $H_{p,N}^{a+2}(\Omega)\hookrightarrow H_{p,N}^{\tau}(\Omega)\hookrightarrow H_{q,N}^{a+1}(\Omega)$
 due to~\eqref{ww}, we obtain from  \cite[I.~Theorem 1.6.1]{LQPP}  and \eqref{f2x}-\eqref{f2}   that 
$$
A:=\begin{pmatrix}
\Delta&0&0\\[1ex]
1&-1&0\\[1ex]
0&1&\Delta-1
\end{pmatrix}\in \mathcal{H}(E_1,E_0)\,.
$$
Setting
$$
f(z):=\big(-\mathrm{div}(u\nabla w),0,0\big)\,,\quad z=(u,v,w)\,,
$$
we  may thus recast \eqref{PP0xxx} as the semilinear autonomous parabolic Cauchy problem
$$
z'=Az+f(z)\,,\quad t>0\,,\qquad z(0)=z^0:=(u^0,v^0,w^0)\,,
$$
in $E_0$.
Since $a+2\xi>a+1>n/q$, we derive continuity of the pointwise multiplication 
\begin{align*}
H_{p}^{a+2\xi}(\Omega)\bullet H_{q}^{a+2\xi}(\Omega)\hookrightarrow  H_{p}^{a+1}(\Omega)
\end{align*}
and therefore
\begin{align*}
\|\mathrm{div}\big(u\nabla w\big)\|_{H_{p}^{a}} \le  c\|u\nabla w\|_{H_{p}^{a+1}} \le c\|u\|_{H_{p}^{a+2\xi}}\|w\|_{H_{q}^{a+1+2\xi}}\,,\quad z=(u,v,w)\in E_\xi\,.
\end{align*}
We thus have
\begin{equation*}
\begin{split}
 \|f(z_1)-f(z_2)\|_{E_0}
\le c\big(\|z_1\|_{E_\xi} +\|z_2\|_{E_\xi} \big)\|z_1-z_2\|_{E_\xi}\,,\quad z_1,\, z_2\in E_\xi\,,
\end{split}
\end{equation*}
hence $f:E_\xi\to E_0$ satisfies \eqref{Xa1d} (with $q=2$ therein). 
 Since $2\xi<1+\alpha$, also  assumption  \eqref{16b} is satisfied and we infer from  Theorem~\ref{XT:1} that  \eqref{PP0xxx}  generates a semiflow on 
$$
 E_\alpha= H_{p,N}^{s}(\Omega)\times H_{p,N}^{\tau}(\Omega)\times H_{q,N}^{s+1}(\Omega)\,.
$$ 
In particular, for each $z^0=(u^0,v^0,w^0)\in H_{p,N}^{s}(\Omega)\times H_{p,N}^{\tau}(\Omega)\times H_{q,N}^{s+1}(\Omega)$ there is a unique strong solution~$z=(u,v,w)$ 
on some maximal interval $J=[0,t^+)$ and, if $t^+<\infty$, then
\begin{equation}\label{gl}
\limsup_{t\nearrow t^+}\|(u(t),v(t),w(t))\|_{H_{p}^s\times H_{p}^\tau\times H_{q}^{s+1}}=\infty\,.
\end{equation}

Finally, consider a solution $z=(u,v,w)$ on the maximal existence interval~${J=[0,t^+)}$ such that~$t^+<\infty$ and
$$
\|u(t)\|_{H_{p}^s}\le c_0<\infty\,,\quad t\in J\,.
$$
 We may assume without loss of generality that  $u^0\in H_{p,N}^{a+2}(\Omega)$ and  $w^0\in  H_{p,N}^{s+2}(\Omega)$ (as $s-a<1$).
Observing that~$\tau> s$, we infer from the bound on $u$ and \eqref{iiw} that
$$
\|v(t)\|_{H_{p}^s}\le c_1<\infty\,,\quad t\in J\,.
$$
In turn,  since $s>a$ and $w^0\in  H_{p,N}^{s+2}(\Omega)$ we  have
$w\in  BUC^{\vartheta}\big(J,H_{p,N}^{s+2-2\rho}(\Omega)\big)$ for some~${\rho>\vartheta>0}$ with 
\[
2\rho<\min\Big\{s-a,\, 1-\frac{n}{p}+\frac{n}{q}\Big\} 
\] from the latter estimate,~\eqref{iiiw}, and \cite[II.~Theorem~5.3.1]{LQPP}.
 Since $H_{p,N}^{s+2-2\rho}(\Omega)\hookrightarrow H_{q,N}^{s+1}(\Omega),$ we  get
$$
\|w(t)\|_{H_{q}^{s+1}}\le c_2<\infty\,,\quad t\in J\,.
$$
 In view of  $a+2>\tau>n/p$,   the pointwise multiplication 
\begin{align*}
 H_{p,N}^{\tau}(\Omega) \bullet H_{p,N}^{s-2\rho+1}(\Omega)\hookrightarrow  H_{p,N}^{a+1}(\Omega)
\end{align*}
is continuous and $ H_{p,N}^{a+2}(\Omega)\hookrightarrow H_{p,N}^{\tau}(\Omega)$. Setting
\[
B(t)y:=\Delta_N y-\mathrm{div}\big(y\nabla w(t)\big)\,,\quad y\in H_{p,N}^{a+2}(\Omega)\,,\quad t\in J\,,
\]
we may now deduce from \cite[I.Theorem~1.3.1, I.Corollary~1.3.2]{LQPP}   that 
\begin{align*}
B\in  BUC^{\vartheta}\big(J,\mathcal{L}(H_{p,N}^{a+2}(\Omega),H_{p,N}^{a}(\Omega))\big)\,,\quad
B(t)\in \mathcal{H}(H_{p,N}^{a+2}(\Omega),H_{p,N}^{a}(\Omega);\kappa,\omega)\,,\quad t\in J\,,
\end{align*}
for some $\kappa\ge 1$ and $\omega>0$. Since
$$
u'(t)= B(t)u(t)\,,\quad t\in J\,,\qquad u(0)=u^0\,,
$$
according to~\eqref{iw}, we conclude from \cite[II.~Theorem~5.4.1]{LQPP} that
$$
\|u(t)\|_{H_{p,N}^{a+2}}\le c_3\,,\quad t\in J\,.
$$
 Invoking again \eqref{iiw}  and recalling that $a+2>\tau$, we finally obtain
that
\begin{equation*}
\|(u(t),v(t),w(t))\|_{H_{p}^s\times H_{p}^\tau\times  H_{q}^{s+1}}\le c_4\,,\quad t\in J\,,
\end{equation*}
for some constant $c_4<\infty$, in contradiction to \eqref{gl}.
This ensures the global existence criterion~\eqref{g5y5}.
\end{proof}

\section{Application to Chemotaxis Systems with Density-Suppressed Motility}\label{Sec6}

We consider a model for autonomous periodic stripe pattern formation  (see \cite{JLZ22} and the literature therein)
\begin{subequations}\label{PP0xx}
\begin{align}
\partial_t u&=\Delta(u\chi(v))+ug(m)\,,&& t>0\,,\quad x\in \Omega\,,\label{u1}\\
\partial_t v&=\Delta v +u-v\,,&& t>0\,,\quad x\in \Omega\,,\label{u2}\\
\partial_t m&=\Delta m-ug(m)\,,&& t>0\,,\quad x\in \Omega\,,\label{u3}
\end{align}
  subject to the initial conditions
\begin{align}\label{initconxx}
u(0,x)=u^0(x)\,,\quad v(0,x)=v^0(x)\,,\quad m(0,x)=m^0(x)\,,\qquad  x\in \Omega\,,
\end{align}
and the boundary conditions
\begin{align}\label{boundconxx}
 \partial_\nu (u\chi(v))=   \partial_\nu v&= \partial_\nu m=0 \quad  \text{on } \partial\Omega\,.
\end{align}
\end{subequations}
 
The following  Theorem~\ref{T23x} is a consequence of Theorem~\ref{T1} 
 and establishes the existence and uniqueness of strong solutions only assuming~$u^0\in L_p(\Omega)$  with~$p>2n$ (in fact, as shown in the proof, even less is required), 
which does not seem to be derivable from \cite{Amann_Teubner}  directly. 
In the proof of Theorem~\ref{T23x} we take full advantage of the fact that we may choose~${\xi>\alpha}$ (in this context we actually have $\xi-\alpha>1/2$).
The global existence criterion~\eqref{b00} thus simplifies the one obtained  in \cite{JLZ22} for this system  being based on  a priori $L_\infty$-estimates for $u$. 

\begin{thm}\label{T23x}
Let $p\in (2n,\infty)$, $\chi\in C^{3-}(\R)$ with $\chi(r)\ge \chi_0>0$ for $r\in\R$, and $g\in C^{2-}(\R)$. 
Let further $\ve\in(0,1/p)$ and $\kappa>0$ satisfy
\begin{equation*} 
\frac{2n}{p}<2\kappa<1-5\ve
\end{equation*}
and set
 \begin{equation*} 
\bar\sigma:=-1+\frac{n}{p}+5\ve\,,\qquad  \bar\kappa:=2\kappa+4\ve\,,\qquad \bar\tau:=\frac{n}{p}+4\ve\,.
\end{equation*}
Then \eqref{PP0xx}   generates a semiflow on $H_{p,N}^{\bar\sigma}(\Omega)\times H_{p,N}^{\bar\kappa}(\Omega)\times H_{p,N}^{\bar\tau}(\Omega)$. 
Moreover, if a maximal strong solution 
\[(u,v,m)\in C \big([0,t^+),H_{p,N}^{\bar\sigma}(\Omega)\times H_{p,N}^{\bar\kappa}(\Omega)\times H_{p,N}^{\bar\tau}(\Omega)\big)\,
\] 
to~\eqref{PP0xx}   is such that  $t^+<\infty$,  then
\begin{equation}\label{b0}
\limsup_{t\nearrow t^+}\left\|(u(t),m(t))\right\|_{L_p \times H_{p}^{1}}=\infty\,.
\end{equation}
 In fact, if $g\ge 0$ and $(u^0,v^0,m^0)\in L_p(\Omega)\times H^1_{p,N}(\Omega)\times H_{p}^{1}(\Omega)$ are non-negative, then  $t^+<\infty$ implies
\begin{equation}\label{b00}
\limsup_{t\nearrow t^+}\left\|u(t) \right\|_{L_p}=\infty\,.
\end{equation}
\end{thm}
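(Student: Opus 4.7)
The approach is to apply the quasilinear theorem Theorem~\ref{T1}, exploiting its flexibility to allow $\xi-\alpha>1/2$. Using the expansion
$$\Delta(u\chi(v))=\nabla\!\cdot\!\bigl(\chi(v)\nabla u\bigr)+\nabla\!\cdot\!\bigl(u\chi'(v)\nabla v\bigr)\,,$$
I place only the genuinely $u$--diffusive first summand into the quasilinear principal part and treat the remainder semilinearly. Writing $U=(u,v,m)$, the split reads
$$A(U)Z:=\bigl(\nabla\!\cdot\!(\chi(v)\nabla z_1),\,(\Delta-1)z_2,\,\Delta z_3\bigr)\,,\quad f(U):=\bigl(\nabla\!\cdot\!(u\chi'(v)\nabla v)+ug(m),\,u,\,-ug(m)\bigr)\,.$$
The ambient scale is a triple product of Bessel potential extrapolation scales built as in Section~\ref{Sec:42}, with component exponents chosen affinely in $\theta$ so that $E_\alpha=H^{\bar\sigma}_{p,N}\times H^{\bar\kappa}_{p,N}\times H^{\bar\tau}_{p,N}$ and $E_1$ is obtained by adding~$2$ to each component exponent.

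To verify~\eqref{ASS} I would pick $\beta$ slightly below $\alpha$ (shifting every exponent down by a small multiple of $\ve$) so that the middle component of $E_\beta$ still has regularity strictly above $1+n/p$. Combined with Lemma~\ref{g-Lemma} and Proposition~\ref{PM}, this makes $v\mapsto\chi(v)$ locally Lipschitz into the space of pointwise multipliers needed so that $\nabla\!\cdot\!(\chi(v)\nabla\cdot)\in\HH(H^{a_1+2}_{p,N},H^{a_1}_{p,N})$ for the first--component exponent $a_1$; the uniform ellipticity $\chi\ge\chi_0$ and a blockwise application of \cite[I.Theorem~1.6.1]{LQPP} then yield $A\in C^{1-}(E_\beta,\HH(E_1,E_0))$, with $O_\beta=E_\beta$ since $\chi$ is globally defined. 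The pivotal choice is $\xi>\alpha$ with $\xi-\alpha>1/2$, which ensures that in $E_\xi$ one simultaneously has $u\in L_\infty$, $\nabla v\in L_\infty$ and $m\in L_\infty$. Iterating Proposition~\ref{PM} and applying Lemma~\ref{g-Lemma} to the Nemitskii operators $\chi'$ and $g$ then delivers the local Lipschitz estimate~\eqref{a1d} for $f$ with $q=2$ and a suitable $\gamma\in(0,1)$ for which $(\xi-\alpha)q<\min\{1,1+\gamma-\alpha\}$ still holds. This simultaneous bookkeeping of several interpolation inequalities in negative--order Bessel potential spaces, needed to control the cross term $\nabla\!\cdot\!(u\chi'(v)\nabla v)$ in the low regularity target $E_\gamma$, is the main technical obstacle.

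With~\eqref{ASS} established, Theorem~\ref{T1} yields the announced semiflow on $E_\alpha=O_\alpha$. For the blow--up criterion~\eqref{b0} I invoke Theorem~\ref{T1}(iv): it suffices to show that a uniform bound on $(u,m)$ in $L_p\times H^1_p$ forces the orbit to be relatively compact in~$E_\alpha$. Since $\bar\sigma<0$ and $\bar\tau<1$, Rellich--Kondrachov provides compact embeddings $L_p\hookrightarrow H^{\bar\sigma}_{p,N}$ and $H^1_p\hookrightarrow H^{\bar\tau}_{p,N}$, which handle the first and third components. For the middle component, the variation--of--constants formula for~\eqref{u2} together with the semigroup estimate $\|e^{t(\Delta-1)}\|_{\mathcal{L}(L_p,H^r_{p,N})}\lesssim t^{-r/2}$ places $v$ in $L_\infty(0,t^+;H^r_{p,N})$ for any $r<2$, which compactly embeds into $H^{\bar\kappa}_{p,N}$. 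Hence the orbit is relatively compact in $E_\alpha$, contradicting $t^+<\infty$, and~\eqref{b0} follows.

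For the refined criterion~\eqref{b00} under $g\ge0$ and nonnegative initial data, I would first check that $u,v,m$ remain nonnegative by the maximum principle applied to~\eqref{u1}--\eqref{u3} (the first using its divergence structure with $\chi(v)>0$). The differential inequality $\partial_t m\le\Delta m$ then yields $0\le m\le\|m^0\|_\infty$ uniformly (note $m^0\in H^1_p\hookrightarrow L_\infty$ since $p>2n$), whence $g(m)$ is uniformly bounded. Standard parabolic estimates applied to~\eqref{u3} with source $ug(m)\in L_\infty(0,t^+;L_p)$ and initial datum in $H^1_p$ then propagate an $L_p$ bound on $u$ to an $H^1_p$ bound on $m$, so~\eqref{b00} reduces to~\eqref{b0}.
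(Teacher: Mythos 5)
Your overall strategy coincides with the paper's: recast \eqref{PP0xx} as a quasilinear problem for Theorem~\ref{T1} on a product of Bessel potential extrapolation scales with different component exponents, put $\mathrm{div}(\chi(v)\nabla\cdot)$ into the principal part, treat $\mathrm{div}(u\chi'(v)\nabla v)$ and the reaction terms semilinearly via Proposition~\ref{PM} and Lemma~\ref{g-Lemma}, and obtain \eqref{b0} from relative compactness of bounded orbits and \eqref{b00} from the comparison principle. However, there is one genuine gap in your decomposition: you move the coupling term $u$ in the $v$-equation \eqref{u2} into the semilinearity, so the second component of $f$ is the map $u\mapsto u$, which must send the \emph{first} component of $E_\xi$ into the \emph{second} component of $E_\gamma$. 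In your scale the first component of $E_\xi$ has regularity $\bar\sigma+2(\xi-\alpha)<-1+\tfrac np+5\ve+1=\tfrac np+5\ve$, while the second component of $E_\gamma$ has regularity $\bar\kappa+2(\gamma-\alpha)=2\kappa+4\ve+2(\gamma-\alpha)$ with $2(\gamma-\alpha)>2(\xi-\alpha)\cdot 2-2>-10\ve$ forced by \eqref{a1e}; since $2\kappa>2n/p$, the required embedding would need roughly $n/p\gtrsim 2n/p$ and fails for the stated range of $\kappa$ (already for $n\ge 2$, and for most of the admissible $\kappa$ even when $n=1$). The paper avoids this by keeping the coupling inside the generator, $A(w)(z_1,z_2,z_3)=(\dots,(\Delta-1)z_2+z_1,\dots)$, where it is a relatively bounded perturbation: one only needs $H^{2\sigma+2}_{p,N}(\Omega)\hookrightarrow H^{2\kappa}_{p,N}(\Omega)$, i.e.\ $2\kappa\le 1+\tfrac np+\ve$, which is automatic from $2\kappa<1$. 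You should adopt that placement; the rest of your verification of \eqref{ASS} then goes through.

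Two smaller points. First, your ``pivotal choice'' $\xi-\alpha>1/2$ is incompatible with \eqref{a1e} for $q=2$, which forces $(\xi-\alpha)q<1$, i.e.\ $\xi-\alpha<1/2$ (the paper's own remark before the theorem is misleading here; its proof takes $\xi-\alpha=\tfrac12-\tfrac{7\ve}{8}$). Taking $\xi-\alpha$ slightly \emph{below} $1/2$ still gives $u\in L_\infty$ in $E_\xi$ because $\bar\sigma=-1+\tfrac np+5\ve$, so $\bar\sigma+2(\xi-\alpha)>\tfrac np$ for $\xi-\alpha>\tfrac12-\tfrac{5\ve}{2}$; your $L_\infty$-based bookkeeping survives this correction. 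Second, your compactness argument for \eqref{b0} (Rellich for the $u$- and $m$-components, Duhamel for $v$ to upgrade an $L_p$-bound on $u$ to a bound on $v$ in $H^r_{p,N}$, $r<2$) is correct and is essentially the paper's reduction of \eqref{b1} to \eqref{b0}; the treatment of \eqref{b00} matches the paper as well.
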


Before establishing the proof of  Theorem~\ref{T23x} we  note:

\begin{rem}\label{Rems:23}\phantom{a}
 The parameters are chosen such that  $L_p(\Omega)\times H_{p}^1(\Omega)\times H_{p}^{1}(\Omega)$ is contained in the space
   $H_{p,N}^{\bar\sigma}(\Omega)\times H_{p,N}^{\bar\kappa}(\Omega)\times H_{p,N}^{\bar\tau}(\Omega)$ of initial data.
\end{rem}

We now present the proof of Theorem~\ref{T23x}.

\begin{proof}[Proof of Theorem~\ref{T23x}]
For $w=(u,v,m)$ and $w^0=(u^0,v^0,m^0)$ we may recast \eqref{PP0xx} as a quasilinear Cauchy problem
\begin{equation}\label{QEP}
w'=A(w)w+f(w)\,,\quad t>0\,,\qquad w(0)=w^0\,,
\end{equation}
by  formally setting 
\begin{equation}\label{A}
A(w):=\big[(z_1,z_2,z_3)\mapsto \big(\mathrm{div}\big(\chi(v)\nabla z_1\big),(\Delta-1)z_2+z_1,\Delta z_3\big]
\end{equation}
and
\begin{equation}\label{f}
f(w):=\big(\mathrm{div}\big(u\chi'(v)\nabla v\big)+u g(m),0,-u g(m)\big)
\end{equation}
on suitable spaces which we introduce now. 
To this end, we set
\begin{equation*} 
2\sigma:=-1+\frac{n}{p}+\ve\,,\qquad  \alpha:=2\ve\,,\qquad 2\tau:=\frac{n}{p}\,,
\end{equation*}
and define
\begin{align*}
&E_0:=H_{p,N}^{2\sigma}(\Omega)\times H_{p,N}^{2\kappa}(\Omega)\times H_{p,N}^{2\tau}(\Omega)\,,\qquad E_1:=H_{p,N}^{2\sigma+2}(\Omega)\times H_{p,N}^{2\kappa+2}(\Omega)\times H_{p,N}^{2\tau+2}(\Omega)
\end{align*}
and note from \eqref{f2}-\eqref{f3}, since   $2\sigma\in (-1+1/p,0)$ and $2\kappa+2,\, 2\tau+2\in(2,3)$, that
\begin{align*}
&E_\theta:=[E_0,E_1]_\theta=H_{p,N}^{2\sigma+2\theta}(\Omega)\times H_{p,N}^{2\kappa+2\theta}(\Omega)\times H_{p,N}^{2\tau+2\theta}(\Omega)\,,\quad \theta\in [0,1]\setminus\Sigma\,,
\end{align*}
 where
\[
 \Sigma:=\Big\{\frac{1}{2}\Big(1+\frac{1}{p}-2\sigma\Big),\,\frac{1}{2}\Big(1+\frac{1}{p}-2\kappa\Big),\, \frac{1}{2}\Big(1+\frac{1}{p}-2\tau\Big)\Big\}.
\]
We further set
\begin{equation*}
  \gamma:=\ve\,,\qquad \beta:=\frac{3\ve}{2} \,,\qquad  \xi:=\frac{1}{2}+\frac{9\ve}{8}\,,
\end{equation*}
and note that $0<\gamma<\beta<\alpha<\xi<1$ and $\gamma,\, \beta,\,\alpha,\, \xi\not\in\Sigma $.
 We choose $\kappa'$ such that $2\kappa>2\kappa'>2n/p$ and note that, since
$$
2\kappa'+2\beta-\frac{n}{p}>\frac{n}{p}+  3\ve=:\rho\in(0,1)\,,
$$
we have~${H_p^{2\kappa'+2\beta}(\Omega)\hookrightarrow C^{\rho}(\bar\Omega)}$. 
 Moreover,   Lemma \ref{g-Lemma} and \eqref{f4}   imply
$$
[v\mapsto \chi(v)]\in C^{1-}\big(H_p^{2\kappa+2\beta}(\Omega),H_p^{2\kappa'+2\beta}(\Omega)\big)\,.
$$
Since $H_{p,N}^{2\sigma+1}(\Omega)$ is  an algebra with respect to pointwise multiplication 
and $2\kappa'+2\beta>2\sigma+1, $ we now infer from  \cite[Theorem~8.5]{Amann_Teubner}\footnote{Set $2\bar \alpha:=2\bar\beta:=2\sigma+2=1+n/p+\ve$. 
Then $1\leq 2\bar\alpha\leq 2$,  $2-2\bar\alpha\leq 2\bar\beta\leq2\bar\alpha$,  and  $2\bar\beta\neq 1+1/p$. 
The assumptions on~$\chi$ ensure that $(A_1(v),N) \in\mathcal{E}^{\bar\alpha}(\Omega) $ for each given   $v\in H_{p,N}^{2\kappa+2\beta}(\Omega)$ (in the notation of \cite{Amann_Teubner}) since~${\chi(v)\in C^\rho(\bar\Omega)}$ with $\rho>2\bar\alpha-1>0$. 
Applying  \cite[Theorem~8.5]{Amann_Teubner},
we get $A_1(v)\in \mathcal{H}\big(H_{p,N}^{2\sigma+2}(\Omega),H_{p,N}^{2\sigma}(\Omega)\big)$.} that for
\begin{equation*}
A_1(v)z_1:= \mathrm{div}\big(\chi(v)\nabla z_1\big) 
\end{equation*} 
we have
\begin{equation}\label{t4x}
A_1\in C^{1-}\big(H_p^{2\kappa+2\beta}(\Omega),\mathcal{H}\big(H_{p,N}^{2\sigma+2}(\Omega),H_{p,N}^{2\sigma}(\Omega)\big)\big)\,.
\end{equation} 
Now, 
we obtain from \eqref{t4x}, \eqref{f2x}-\eqref{f2} together with 
$$
H_{p,N}^{2\sigma+2}(\Omega)\hookrightarrow H_{p,N}^{1}(\Omega)\hookrightarrow H_{p,N}^{2\kappa}(\Omega)
$$
and a standard perturbation argument for the second component that, with $A$ defined in \eqref{A},
\begin{equation}\label{t4}
\big[w\mapsto A(w)\big]\in C^{1-}\big(E_\beta,\mathcal{H}(E_1,E_0)\big)\,.
\end{equation} 
As for the nonlinear part $f$, we first note from Proposition~\ref{PM}
that the pointwise multiplication (denoted by $\bullet$ in the following)
\begin{equation}\label{t6-}
H_p^{2\sigma+2\xi}(\Omega)\bullet H_p^{2\kappa'+2\beta}(\Omega)\bullet H_p^{2\kappa+2\xi-1}(\Omega)\longrightarrow H_p^{2\sigma+2\gamma+1}(\Omega)
\end{equation}
is continuous since $2\xi>2\gamma+1$, $ 2\sigma+2\gamma+1=n/p+3\ve>n/p$, and
\begin{align*} 2\kappa'+2\beta=2\kappa'+3\ve>\frac{2n}{p}+3\ve>\frac{n}{p}+3\ve ,\qquad 2\kappa+2\xi-1>\frac{2n}{p}+2\ve>\frac{n}{p}+3\ve \,.
\end{align*}
Consider in the following $w=(u,v,m)\in E_\xi$ with $\|w\|_{E_\beta}\le R$. Since
$$
[h\mapsto \chi'(h)]\in C^{1-}\big(H_p^{2\kappa+2\beta}(\Omega),H_p^{2\kappa'+2\beta}(\Omega)\big)
$$
is bounded on bounded sets according to Lemma \ref{g-Lemma} and \eqref{f4}, we deduce from \eqref{t6-} that
\begin{equation}\label{p1}
\left\|\mathrm{div}\big(u\chi'(v)\nabla v\big)\right\|_{H_p^{2\sigma+2\gamma}}\le c(R)\|u\|_{H_p^{2\sigma+2\xi}}\,\|v\|_{H_p^{2\kappa+2\xi}}\,.
\end{equation}
Noticing that also the pointwise multiplication
\begin{equation*}
H_p^{2\sigma+2\xi}(\Omega)\bullet H_p^{2\tau+\gamma+ \beta}(\Omega) \longrightarrow H_p^{2\tau+2\gamma}(\Omega)\hookrightarrow H_p^{2\sigma+2\gamma}(\Omega)
\end{equation*}
is continuous, since $2\tau>2\sigma$, $2\tau+\gamma+ \beta>2\tau+2\gamma>n/p$, and
$$
2\sigma+2\xi>\frac{n}{p}+2\ve=2\tau+2\gamma\,,
$$
and  the mapping
$$
[m\mapsto g(m)]\in C^{1-}\big(H_p^{2\tau+2\beta}(\Omega),H_p^{2\tau+\gamma+ \beta}(\Omega)\big)
$$
is bounded on bounded sets  (see again Lemma \ref{g-Lemma} and \eqref{f4}) we derive that
\begin{equation}\label{p2}
 \|u g(m)\|_{H_p^{2\sigma+2\gamma}}\le c  \|u g(m)\|_{H_p^{2\tau+2\gamma}}\le c(R)\|u\|_{H_p^{2\sigma+2\xi}} \,.
\end{equation}
It then follows from \eqref{p1}-\eqref{p2} and the definition of $f$ in \eqref{f} that
\begin{align}\label{t10}
\|f(w)-f(\bar w)\|_{E_\gamma}
\le c(R)\big[1+\|w\|_{E_\xi}+\|\bar w\|_{E_\xi}\big]\big[ \big(\|w\|_{E_\xi}+\|\bar w\|_{E_\xi}\big) \|w-\bar w\|_{E_\beta}+\|w-\bar w\|_{E_\xi}\big]
\end{align}
for $w, \bar w\in E_\xi$ with $\|w\|_{E_\beta}, \|\bar w\|_{E_\beta}\le R$.

Setting $q=2,$ it follows that the assumptions of Theorem~\ref{T1} are fulfilled in the context of the quasilinear evolution problem~\eqref{QEP}.
Consequently,  the solution map associated  with~\eqref{QEP} defines a semiflow on
\begin{align*}
E_\alpha=H_{p,N}^{2\sigma+2\alpha}(\Omega)\times H_{p,N}^{2\kappa+2\alpha}(\Omega)\times H_{p,N}^{2\tau+2\alpha}(\Omega) 
= H_{p,N}^{\bar\sigma}(\Omega)\times H_{p,N}^{\bar\kappa}(\Omega)\times H_{p,N}^{\bar\tau}(\Omega)\,.
\end{align*}
 Noticing that
$$
L_p(\Omega) \times H_{p}^{1}(\Omega)\times H_{p}^{1}(\Omega)\hookrightarrow E_\alpha
$$
since $\bar\sigma<0$ and $\bar\kappa, \bar\tau < 1$, we thus have 
\begin{equation}\label{b1}
\lim_{t\nearrow t^+}\left\|(u(t),v(t),m(t))\right\|_{L_p \times H_{p}^{1}\times H_{p}^{1}}=\infty
\end{equation}
for any  maximal strong solution $w:=(u,v,m)$ to~\eqref{QEP} on $J=[0,t^+)$  with $t^+<\infty$ corresponding to an initial value~${w^0=(u^0,v^0,m^0)\in E_\alpha}$.
We may assume that $w^0\in H_{p,N}^{1}(\Omega)\times H_{p,N}^{2 }(\Omega)\times H_{p,N}^{2 }(\Omega)$.
Then, if $\|u(t)\|_{L_p}\le c_0$ for $t\in J$,  we have $\|v(t)\|_{H_p^1}\le c_1$ for $t\in J$ according to \eqref{u3},  so that \eqref{b1} reduces to \eqref{b0}.

Finally, assume that $g\ge 0$ and let $(u^0,v^0,m^0)\in L_p(\Omega)\times H^1_{p,N}(\Omega)\times H_{p}^{1}(\Omega)$ satisfy $u^0,v^0,m^0\ge 0$ a.e. in $\Omega$. 
The comparison principle (together with a density argument and the semiflow property)  yields~${u(t),v(t),m(t) \ge 0}$ a.e. in~$ \Omega$ for all $t\in J$.
 We may assume,  via a bootstrapping argument,  that the initial values belong to~$H^3_{p,N}(\Omega)$. 
 Using again  the comparison principle   together with~\eqref{u3} we get~${\|m(t)\|_\infty\le \|m^0\|_\infty}$ for all $t\in J$. In particular, we obtain that~$\|g(m(t))\|_\infty\le c_2$ for~$t\in J$. 
 Assume now that $t^+<\infty$ and~${\|u(t)\|_{L_p}\le c_0}$ for~$t\in J$.  
Then $\|u(t)g(m(t))\|_{L_p}\le c_3$ for $t\in J$ and hence $\|m(t)\|_{H_p^1}\le c_4$ for~${t\in J}$ due to \eqref{u3}, in contradiction to \eqref{b0}. Consequently, we obtain~\eqref{b00} if~$t^+<\infty$.
\end{proof}

\bibliographystyle{siam}
\bibliography{Literature}

\begin{thebibliography}{10}

\bibitem{AT88}
{\sc P.~Acquistapace and B.~Terreni}, {\em On quasilinear parabolic systems},
  Math. Ann., 282 (1988), p.~315–335.

\bibitem{A83}
{\sc H.~Amann}, {\em {Gewöhnliche {D}ifferentialgleichungen}}, {de Gruyter
  Lehrbuch. [de Gruyter Textbook]}, Walter de Gruyter \& Co., Berlin, 1983.

\bibitem{Amann_ASNSP_84}
\leavevmode\vrule height 2pt depth -1.6pt width 23pt, {\em Existence and
  regularity for semilinear parabolic evolution equations}, Ann. Scuola Norm.
  Sup. Pisa Cl. Sci. (4), 11 (1984), pp.~593--676.

\bibitem{Am88}
\leavevmode\vrule height 2pt depth -1.6pt width 23pt, {\em {Dynamic theory of
  quasilinear parabolic equations. {I}. {A}bstract evolution equations}},
  Nonlinear Anal., 12 (1988), p.~895–919.

\bibitem{HAm91}
\leavevmode\vrule height 2pt depth -1.6pt width 23pt, {\em Highly degenerate
  quasilinear parabolic systems}, Ann. Scuola Norm. Sup. Pisa Cl. Sci. (4), 18
  (1991), pp.~135--166.

\bibitem{AmannMult}
\leavevmode\vrule height 2pt depth -1.6pt width 23pt, {\em Multiplication in
  {S}obolev and {B}esov spaces}, in Nonlinear analysis, Sc. Norm. Super. di
  Pisa Quaderni, Scuola Norm. Sup., Pisa, 1991, pp.~27--50.

\bibitem{Amann_Teubner}
\leavevmode\vrule height 2pt depth -1.6pt width 23pt, {\em {Nonhomogeneous
  linear and quasilinear elliptic and parabolic boundary value problems}}, in
  {Function spaces, differential operators and nonlinear analysis
  ({F}riedrichroda, 1992)}, vol.~133 of {Teubner-Texte Math.}, Teubner,
  Stuttgart, 1993, p.~9–126.

\bibitem{LQPP}
\leavevmode\vrule height 2pt depth -1.6pt width 23pt, {\em Linear and
  quasilinear parabolic problems. {V}ol. {I}}, vol.~89 of Monographs in
  Mathematics, Birkh\"{a}user Boston, Inc., Boston, MA, 1995.
\newblock Abstract linear theory.

\bibitem{A00}
\leavevmode\vrule height 2pt depth -1.6pt width 23pt, {\em On the strong
  solvability of the {N}avier-{S}tokes equations}, J. Math. Fluid Mech., 2
  (2000), pp.~16--98.

\bibitem{AW05}
{\sc H.~Amann and {\relax Ch}.~Walker}, {\em Local and global strong solutions
  to continuous coagulation-fragmentation equations with diffusion}, J.
  Differential Equations, 218 (2005), pp.~159--186.

\bibitem{An90}
{\sc S.~B. Angenent}, {\em {Nonlinear analytic semiflows}}, Proc. Roy. Soc.
  Edinburgh Sect. A, 115 (1990), pp.~91--107.

\bibitem{Biler_AMSA_98}
{\sc P.~Biler}, {\em Local and global solvability of some parabolic systems
  modelling chemotaxis}, Adv. Math. Sci. Appl., 8 (1998), pp.~715--743.

\bibitem{ClementLi}
{\sc P.~Cl\'{e}ment and S.~Li}, {\em Abstract parabolic quasilinear equations
  and application to a groundwater flow problem}, Adv. Math. Sci. Appl., 3
  (1993/94), pp.~17--32.

\bibitem{CS01}
{\sc {\relax Ph}.~Cl\'{e}ment and G.~Simonett}, {\em Maximal regularity in
  continuous interpolation spaces and quasilinear parabolic equations}, J.
  Evol. Equ., 1 (2001), pp.~39--67.

\bibitem{DaP96}
{\sc G.~{Da Prato}}, {\em Fully nonlinear equations by linearization and
  maximal regularity, and applications}, in Partial differential equations and
  functional analysis, vol.~22 of Progr. Nonlinear Differential Equations
  Appl., Birkhäuser Boston, Boston, MA, 1996, p.~80–92.

\bibitem{DaPG79}
{\sc G.~{Da Prato} and P.~Grisvard}, {\em Equations d'évolution abstraites non
  linéaires de type parabolique}, Ann. Mat. Pura Appl. (4), 120 (1979),
  p.~329–396.

\bibitem{DaPL88}
{\sc G.~{Da Prato} and A.~Lunardi}, {\em Stability, instability and center
  manifold theorem for fully nonlinear autonomous parabolic equations in
  {B}anach space}, Arch. Rational Mech. Anal., 101 (1988), p.~115–141.

\bibitem{G88}
{\sc D.~Guidetti}, {\em Convergence to a stationary state and stability for
  solutions of quasilinear parabolic equations}, Ann. Mat. Pura Appl. (4), 151
  (1988), p.~331–358.

\bibitem{JLZ22}
{\sc J.~Jiang, {\relax Ph}.~Lauren\c{c}ot, and Y.~Zhang}, {\em Global
  existence, uniform boundedness, and stabilization in a chemotaxis system with
  density-suppressed motility and nutrient consumption}, Comm. Partial
  Differential Equations, 47 (2022), pp.~1024--1069.

\bibitem{KPW10}
{\sc M.~K\"{o}hne, J.~Pr\"{u}ss, and M.~Wilke}, {\em On quasilinear parabolic
  evolution equations in weighted {$L_p$}-spaces}, J. Evol. Equ., 10 (2010),
  pp.~443--463.

\bibitem{PhL19}
{\sc {\relax Ph}.~Lauren\c{c}ot}, {\em Global bounded and unbounded solutions
  to a chemotaxis system with indirect signal production}, Discrete Contin.
  Dyn. Syst. Ser. B, 24 (2019), pp.~6419--6444.

\bibitem{LW22}
{\sc {\relax Ph}.~Lauren\c{c}ot and {\relax Ch}.~Walker}, {\em Well-posedness
  of the coagulation-fragmentation equation with size diffusion}, Differential
  Integral Equations, 35 (2022), pp.~211--240.

\bibitem{CPW10}
{\sc J.~LeCrone, J.~Pr\"{u}ss, and M.~Wilke}, {\em On quasilinear parabolic
  evolution equations in weighted {$L_p$}-spaces {II}}, J. Evol. Equ., 14
  (2014), pp.~509--533.

\bibitem{LeCroneSimonett20}
{\sc J.~LeCrone and G.~Simonett}, {\em On quasilinear parabolic equations and
  continuous maximal regularity}, Evol. Equ. Control Theory, 9 (2020),
  pp.~61--86.

\bibitem{Lu84}
{\sc A.~Lunardi}, {\em Abstract quasilinear parabolic equations}, Math. Ann.,
  267 (1984), p.~395–415.

\bibitem{Lu85}
\leavevmode\vrule height 2pt depth -1.6pt width 23pt, {\em Asymptotic
  exponential stability in quasilinear parabolic equations}, Nonlinear Anal., 9
  (1985), p.~563–586.

\bibitem{Lu85b}
\leavevmode\vrule height 2pt depth -1.6pt width 23pt, {\em Global solutions of
  abstract quasilinear parabolic equations}, J. Differential Equations, 58
  (1985), p.~228–242.

\bibitem{Lu87}
\leavevmode\vrule height 2pt depth -1.6pt width 23pt, {\em On the local
  dynamical system associated to a fully nonlinear abstract parabolic
  equation}, in Nonlinear analysis and applications ({A}rlington, {T}ex.,
  1986), vol.~109 of Lecture Notes in Pure and Appl. Math., Dekker, New York,
  1987, p.~319–326.

\bibitem{L95}
\leavevmode\vrule height 2pt depth -1.6pt width 23pt, {\em {Analytic Semigroups
  and Optimal Regularity in Parabolic Problems}}, {Progress in Nonlinear
  Differential Equations and their Applications, 16}, Birkhäuser Verlag,
  Basel, 1995.

\bibitem{MW_MOFM20}
{\sc B.-V. Matioc and {\relax Ch}.~Walker}, {\em On the principle of linearized
  stability in interpolation spaces for quasilinear evolution equations},
  Monatsh. Math., 191 (2020), pp.~615--634.

\bibitem{PW17}
{\sc J.~Pr\"{u}ss and M.~Wilke}, {\em Addendum to the paper ``{O}n quasilinear
  parabolic evolution equations in weighted {$L_p$}-spaces {II}''
  [{MR}3250797]}, J. Evol. Equ., 17 (2017), pp.~1381--1388.

\bibitem{P02}
{\sc J.~Prüss}, {\em Maximal regularity for evolution equations in
  {$L_p$}-spaces}, Conf. Semin. Mat. Univ. Bari,  (2002), p.~1–39 (2003).

\bibitem{PS16}
{\sc J.~Prüss and G.~Simonett}, {\em Moving interfaces and quasilinear
  parabolic evolution equations}, vol.~105 of Monographs in Mathematics,
  Birkhäuser/Springer, [Cham], 2016.

\bibitem{PSW18}
{\sc J.~Prüss, G.~Simonett, and M.~Wilke}, {\em Critical spaces for
  quasilinear parabolic evolution equations and applications}, J. Differential
  Equations, 264 (2018), p.~2028–2074.

\bibitem{Tr78}
{\sc H.~Triebel}, {\em {Interpolation Theory, Function Spaces, Differential
  Operators}}, North-Holland, Amsterdam, 1978.

\bibitem{WalkerEJAM}
{\sc {\relax Ch}.~Walker}, {\em Global existence for an age and spatially
  structured haptotaxis model with nonlinear age-boundary conditions}, European
  J. Appl. Math., 19 (2008), pp.~113--147.

\bibitem{WW06}
{\sc {\relax Ch}.~Walker and G.~F. Webb}, {\em Global existence of classical
  solutions for a haptotaxis model}, SIAM J. Math. Anal., 38 (2006/07),
  pp.~1694--1713.

\end{thebibliography}
\end{document}